\newlength\bshft 
\def\fakebold#1{\ThisStyle{\ooalign{$\SavedStyle#1$\cr%
  \kern-\bshft$\SavedStyle#1$\cr%
  \kern\bshft$\SavedStyle#1$}}}
\newcolumntype{P}[1]{>{\centering\arraybackslash}p{#1}} 
\theoremstyle{definition} 
\newtheorem{defn}{Definition}
\newtheorem{thm}{Theorem}[section]
\newtheorem{cor}[thm]{Corollary}
\newtheorem{prop}[thm]{Proposition}
\newtheorem{rem}{Remark}
\newtheorem{exa}{Example}
\newcommand{\Z}{\mathbb{Z}}
\newcommand{\A}{\mathcal{A}}
\newcommand{\al}{\alpha}
\newcommand{\be}{\beta}
\newcommand{\la}{\lambda}
\newcommand{\si}{\sigma}
\newcommand{\gr}{\textnormal{gr}}
\newcommand{\rk}{\textnormal{rk}}
\newcommand{\tor}{\textnormal{tor$_2$}}
\newcommand{\rktor}{\textnormal{rk tor$_2$}}
\newcommand\Widetilde[1]{\ThisStyle{%
  \setbox0=\hbox{$\SavedStyle#1$}%
  \stackengine{-.1\LMpt}{$\SavedStyle#1$}{%
    \stretchto{\scaleto{\SavedStyle\mkern.2mu\AC}{.5150\wd0}}{.6\ht0}
  }{O}{c}{F}{T}{S}%
}}
\title{Extremal Khovanov homology and the girth of a knot}
\author[R. Sazdanovi\'{c}]{Radmila Sazdanovi\'{c}}
\thanks{RS partially supported by the Simons Foundation Collaboration Grant 318086  and NSF Grant DMS 1854705.}
\address{Department of Mathematics, North Carolina State University, Raleigh, NC 27695}
\email{rsazdan@ncsu.edu}
\author{Daniel Scofield}
\address{Department of Mathematics\\
Francis Marion University\\
Florence, SC}
\email{daniel.scofield@fmarion.edu}
\begin{document}

\maketitle

\begin{abstract} We utilize relations between Khovanov and chromatic graph homology to determine extreme Khovanov groups
and corresponding coefficients of the Jones polynomial. The extent to which chromatic homology and chromatic polynomial can be used to compute integral Khovanov homology of a link depends on the maximal girth of its all-positive graphs. In this paper we also define the girth of a link, discuss relations to other knot invariants, and the possible values for girth. Analyzing girth leads to a description of possible all-A state graphs of any given link; e.g., if a link has a diagram such that the girth of the corresponding all-A graph is equal to $\ell>2$, than the girth of the link is equal to $\ell.$
    
\end{abstract}

\section{Introduction}

Khovanov homology \cite{Khov1} is a bigraded homology theory which is an invariant of knots and links, categorifying the Jones polynomial. In general, the structure of Khovanov homology and the types of torsion which occur may vary widely \cite{DBN, MPS, Stosic1}. For certain links, there is a partial isomorphism between the extreme gradings of Khovanov homology and chromatic graph homology, a categorification of the chromatic polynomial for graphs \cite{AP,PS}. The isomorphism between these two theories describes a part of Khovanov homology that is supported on two diagonals and has only $\Z_2$ torsion, similar to the Khovanov homology of an alternating link. Moreover, this correspondence allows us to describe ranks of groups in Khovanov homology in terms of combinatorial information from a diagram, or a graph associated to the diagram.
		
Khovanov homology of alternating knots is determined by the Jones polynomial and the signature of a knot and, similarly chromatic graph homology over the algebra $\A_2 = \Z[x]/(x^2)$ is determined by the chromatic polynomial \cite{LS}. This approach enables us to determine some extremal Khovanov homology groups based on combinatorial results about the chromatic polynomial of a graph which determines its chromatic homology. The following theorem illustrates the type of the results we obtain.

\noindent{\bf Theorem~\ref{rankKhovanovgirth}}\textit{	Let $D$ be a diagram of a link $L$ such  that the all-positive graph of $D$ has girth $\ell$ and satisfies the conditions of Theorem \ref{rankHGRgirth}.  For $0 < i < \ell$, the ranks of Khovanov homology  groups of $L$ are given by:
	\begin{equation*}
	\rk Kh^{i-c_-(D),N+2i}(L) = \left(\displaystyle\sum_{\substack{r \ge 0,\\ 0\le k = i-2r \le i}} \binom{p_1-2+k}{k} \right)-n_{i+1} +(-1)^{i+1} \delta^b
	\end{equation*}
	where $p_1$ is the cyclomatic number of the graph, $n_{i+1}$ is the number of $(i+1)$-cycles, and $\delta^b$ measures bipartiteness.}

The applicability of our results depends on a quantity defined in Section \ref{Girth1} that we call the girth of a link. We find upper bounds for the value of this invariant based on Khovanov homology and the Jones polynomial. We prove results on the girth of connected sums and of alternating knots, describing another upper bound in terms of crossing number and signature.

Analyzing girth of a link leads to a somewhat surprising characterization of the types of graphs that can be obtained from a homogeneous resolution of diagrams of a given knot (all-positive or all-A state graph)

\noindent{\bf Theorem~\ref{girthPossibleKhovanov}}\textit{	Let $D$ be a diagram of a non-trivial link $L$ such  that the all-positive graph of $D$ has girth $\ell.$ Then either the girth of a link equals $\gr(L)=\ell$ or $\ell \in \{1,2\}.$
}

As a consequence we get that if a link has a diagram such that the girth of the corresponding graph is equal to some $\ell>2$, than the girth of the link is equal to $\ell$, see Corollary \ref{girth3more}. In other words, this is saying that if a link $L$ has girth greater than two, all of the corresponding all-A graphs have girth equal to $\gr(L)$, one or two.

\section*{Acknowledgements}
We are grateful to Adam Lowrance for many ideas and useful discussions. RS was partially supported by the Simons Foundation Collaboration Grant 318086  and NSF Grant DMS 1854705.

\section{Background}

\subsection{Jones polynomial}

Let $D$ be a diagram of link $L$. Each crossing of $D$ can be resolved with a positive or negative resolution as shown below. The positive and negative resolutions are sometimes referred to as the A and B resolutions, respectively (see e.g. \cite{DasLin}).

 \begin{figure}[h]
	\centering
	\includegraphics[scale = 0.6]{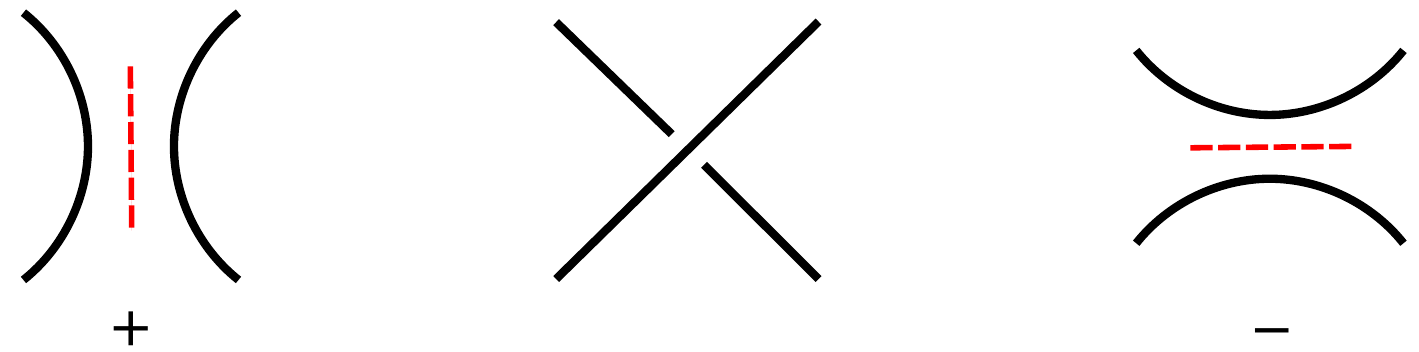}
	\caption{Positive and negative resolutions at a crossing.}
\end{figure}

 The resolution of all crossings in a diagram $D$ produces a collection of disjoint circles known as Kauffman states. From any Kauffman state $s$, we may construct a graph whose vertices correspond to the circles of $s$, and whose edges connect circles whose arcs were obtained by smoothing a single crossing. 
 The Kauffman state $s_+(D)$ is obtained by applying the positive resolution to every crossing in $D$, and we denote the graph obtained from this state by $G_+(D)$ (known as the all-positive or all-A state graph of $D$). Similarly, we define a state $s_-(D)$ with all negative resolutions along with its graph $G_-(D)$.
 
 \begin{figure}[h]
	\centering
	\includegraphics[scale = 0.7]{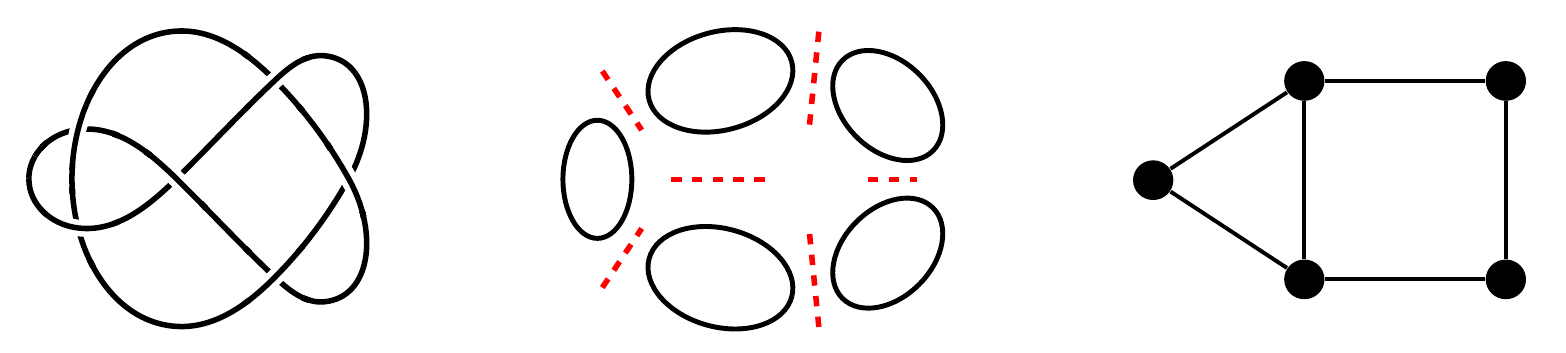}
	\caption{The Kauffman state $s_+(D)$ and the graph $G_+(D)$.}
\end{figure}

We give a definition of the Jones polynomial using Kauffman states as in \cite{SaSco}.

\begin{defn} \label{JonesDef}
Let $L$ be a link and $D$ a diagram of $L$ with $c_+$ positive crossings and $c_-$ negative crossings. The unnormalized Jones polynomial of $L$ is given by:
\begin{equation*}
	\hat{J}_L(q) = (-1)^{c_{-}}q^{c_{+}-2c_{-}}\sum_{i=0}^{c_++c_-} (-1)^i \sum_{\{s~:~n_-(s) = i\}} q^{i}(q+q^{-1})^{|s|}
\end{equation*}

	where $s$ is a Kauffman state of $D$ with $n_{-}(s)$ negative smoothings and $|s|$ connected components.

The normalized version of the Jones polynomial is 
\begin{equation*}
J_L(q) = \hat{J}_L(q)/(q+q^{-1})
\end{equation*}
where $q+q^{-1}$ represents evaluation on the unknot, $\hat{J}_{\Circle}(q) = q+q^{-1}$. 
\end{defn}

Next we introduce some notation that will be useful when discussing graphs.

\begin{defn}\label{p1}
The cyclomatic number $p_1(G)$ of a connected graph $G$ with $v$ vertices and $E$ edges is equal to $p_1(G) = E-v+1$. For planar graphs such as $G_+(D)$, $G_-(D)$ $p_1$ is equal to the number of bounded faces of the graph.
\end{defn}

\begin{defn}[\cite{DasLin}, \cite{LowSpy}]\label{DLreducedgraph}
Let $D$ be a knot diagram with corresponding all-positive graph $G=G_+(D)$. The simplification $G'$ of $G$ is the graph obtained by deleting any loops
in $G$ and replacing each set of multiple edges with a single edge.

Define $\mu$ to be the number of edges in $G'$ which correspond to multiple edges in $G$.
\end{defn}

 We consider the normalized version of the Jones polynomial and denote the coefficients as follows:
\begin{equation} \label{JonesCoef}
    J_L(q) = \be_0q^{C} + \be_1q^{C+2} + \be_2q^{C+4} + \be_3q^{C+6}+ \ldots + \be_iq^{C+2i} + \ldots
\end{equation}

where $C$, the minimal degree of $J_L(q)$, depends on the link $L$.

For a reduced alternating knot, Dasbach and Lin \cite{DasLin} showed that the first three coefficients of the normalized Jones polynomial may be stated in terms of the all-positive graph $G_{+}(D)$. This result is restated in Theorem \ref{DLJones}.

\begin{thm}[\cite{DasLin}] \label{DLJones}
	Let $K$ be a knot with reduced alternating diagram $D$. Let $p_1$ and $t_1$ be the cyclomatic number and the number of triangles in $G_+(D)'$, and let $\mu$ be defined as above. Then the first three coefficients of $J_K(q)$ (up to an overall change in sign) are  $\be_0 = 1, \be_1 = -p_1,$ and $\be_2 = \binom{p_1+1}{2} + \mu - t_1$. 
\end{thm}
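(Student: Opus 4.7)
The plan is to extract the three leading coefficients of $J_K(q)$ directly from the Kauffman state sum in Definition \ref{JonesDef}, exploiting that for a reduced alternating diagram $D$ the all-positive state $s_+(D)$ is extremal: up to the overall monomial shift, the highest-degree contributions to $\hat{J}_L(q)$ come from states that differ from $s_+$ in only a few crossings, so $\be_0, \be_1, \be_2$ are determined by a finite combinatorial analysis of $G := G_+(D)$. An equivalent route is Thistlethwaite's theorem, which identifies $\hat{J}_K(q)$, up to sign and monomial shift, with the Tutte polynomial of $G$; the three top Jones coefficients then match the low-order coefficients of $T_G(x,y)$.

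First I would compute $\be_0$ from the $i=0$ contribution. Writing $v = |s_+(D)|$, the state $s_+$ contributes $(q+q^{-1})^v$, whose top term $q^v$ is the unique source of the leading power of $q$ in $\hat{J}_L$; dividing by $q+q^{-1}$ gives $|\be_0|=1$. For $\be_1$ I would analyze states obtained by switching a single crossing: since $D$ is reduced alternating, no edge of $G$ is a bridge, and for each non-loop edge $e$ switching merges the two adjacent circles, giving $|s|=v-1$. Combining the tail $v\,q^{v-2}$ from $(q+q^{-1})^v$ with the leading $-E\,q^v$ from the $E$ single-switch states and using $p_1 = E-v+1$ produces $\be_1 = -p_1$.

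For $\be_2$ I would enumerate pairs of crossings switched to negative, partitioning by how the two resulting edges meet in $G$: generic pairs drop $|s|$ by $2$, pairs forming a multi-edge in $G$ drop $|s|$ by only $1$ and produce the $+\mu$ correction, and pairs that close into a triangle with a third edge produce the $-t_1$ correction. Summing the $\binom{E}{2}$ pairs with these corrections, then collecting with the tails $\binom{v}{2}$ from $i=0$ and $E(v-1)$ from $i=1$ via the identity $\binom{E}{2}-E(v-1)+\binom{v}{2}=\binom{p_1}{2}$, together with the extra $p_1$ contributed by renormalizing by $q+q^{-1}$, yields $\be_2 = \binom{p_1+1}{2} + \mu - t_1$.

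The hard part is the sign and shift bookkeeping, and controlling which higher-$i$ states can still reach into the top three coefficients; the reduced alternating hypothesis is essential because it forces $G$ to be bridge-free and confines these exceptions. A cleaner alternative is the Thistlethwaite route, where the three formulas fall out of the low-degree expansion of $T_G(x,y)$, with $p_1, \mu, t_1$ appearing through Whitney's formula for the chromatic/Tutte coefficients of the simplification $G'$ combined with the correction relating $T_G$ to $T_{G'}$ under multi-edge reduction.
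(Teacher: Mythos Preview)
The paper does not prove Theorem~\ref{DLJones}; it is quoted from Dasbach and Lin \cite{DasLin} as background and no argument is supplied here. There is therefore no proof in this paper to compare your proposal against.

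For what it is worth, your sketch is broadly the right shape, and the Thistlethwaite/Tutte route you mention at the end is essentially what Dasbach and Lin do in \cite{DasLin}. One correction in your direct state-sum argument: the hypothesis that $D$ is \emph{reduced} guarantees that $G_+(D)$ has no \emph{loops} (no nugatory crossings), which is the condition you need so that every single-crossing switch merges two distinct circles and drops $|s|$ by one. Bridge-freeness is a different condition (dual to loop-freeness in $G_-(D)$); it is also true for reduced alternating diagrams, but it is not the relevant obstruction at this step. Your identification of the $\mu$ and $t_1$ corrections at the $i=2$ level is correct, and your caveat that controlling contributions from higher $i$ is where the alternating hypothesis does real work is exactly the point that a full proof must address.
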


The lowest-degree terms of the Jones polynomial are often referred to as the ``tail," while the highest-degree terms are referred to as the ``head." Note that if the all-positive graph obtained from $D$ is replaced by the all-negative graph in Theorem \ref{DLJones}, a similar result applies to the three extremal coefficients in the head of the Jones polynomial.

\subsection{Chromatic polynomial}

We now define the chromatic polynomial of a graph. Let $G$ be a finite, undirected graph with vertex set $V(G)$ and edge set $E(G)$. We will often denote the cardinalities of these sets by $v = |V(G)|$ and $E = |E(G)|$. If $G$ has an edge between vertices $x, y \in V(G)$, we write the corresponding element in $E(G)$ as $\{x,y\}$.

\begin{defn}[\cite{DKT}] \label{ChromDef}
	A mapping $f:V(G) \to \{1, \ldots, \la\}$ is called a $\la$-coloring of $G$ if for any pair of vertices $x,y \in V(G)$ such that $\{x,y\} \in E(G)$, $f(x) \neq f(y)$. The chromatic polynomial of the graph $G$, denoted $P_G(\la)$, is equal to the number of distinct $\la$-colorings of $G$.
\end{defn}

For any graph $G$, the degree of $P_G(\la)$ is equal to $v$. We will represent the terms of the polynomial as follows:
\begin{equation} \label{ChromCoef}
    P_G(\la) = c_v\la^v + c_{v-1}\la^{v-1} + c_{v-2} \la^{v-2} + \ldots + c_{v-i} \la^{v-i} + \ldots+ c_{1} \la
\end{equation}

The first few coefficients of $P_G(\la)$ can be described in terms of cycles and subgraphs found in $G$.

\begin{defn}
	The girth of a graph $G$, denoted $\ell(G)$, is the number of edges in the shortest cycle in $G$. 
\end{defn}

\begin{defn}
	Let $H$ be a subgraph of graph $G$. We say $H$ is an induced subgraph if for every $\{x,y\} \in E(G)$ with $x, y \in V(H)$, the edge $\{x,y\}$ is in $E(H)$.
\end{defn}

We adopt the convention that the girth of a tree is zero, but it is worth noting that there are different conventions considering girth of a tree to be infinite \cite{Boll1, Diestel}.

\begin{thm}\cite{Meredith1} \label{Meredith}
	If $G$ is a graph with girth $\ell>2$ and $n_{\ell}$ cycles of length $\ell$, then the first $\ell$ coefficients of the chromatic polynomial $P_G(\la)$ are:
	$$c_{v-i} = \begin{cases}
	(-1)^i \displaystyle\binom{E}{i} & 0 \le i < \ell - 1\\
	(-1)^{\ell-1} \left( \displaystyle\binom{E}{\ell-1} - n_{\ell}\right) & i = \ell - 1\\
	\end{cases}$$
\end{thm}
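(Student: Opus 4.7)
The plan is to invoke Whitney's broken circuit theorem. Fix an arbitrary total order on $E(G)$; a \emph{broken circuit} is a cycle of $G$ with its largest edge deleted, and the broken circuit theorem gives
\begin{equation*}
P_G(\la) = \sum_{i=0}^{E} (-1)^i N_i \, \la^{v-i},
\end{equation*}
where $N_i$ is the number of $i$-element subsets of $E(G)$ that contain no broken circuit. Matching with the expansion (\ref{ChromCoef}) yields $c_{v-i} = (-1)^i N_i$, so the theorem reduces to an explicit count of broken-circuit-free $i$-subsets for $0 \le i \le \ell - 1$.

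First, since the shortest cycle of $G$ has length $\ell$, every broken circuit has at least $\ell - 1$ edges. Hence for $0 \le i < \ell - 1$ no $i$-subset of edges can possibly contain a broken circuit, so $N_i = \binom{E}{i}$ and the first case of the formula follows. For $i = \ell - 1$, an $(\ell-1)$-subset $S$ contains a broken circuit only when $S$ itself is one. The assignment $C \mapsto C \setminus \{\max C\}$ from $\ell$-cycles of $G$ to broken circuits of size $\ell - 1$ is a bijection: its inverse sends a broken circuit $B$ (necessarily a path of $\ell - 1$ edges) to $B \cup \{e\}$, where $e$ is the unique edge of $G$ joining the two endpoints of the path $B$ and carrying label larger than every edge of $B$. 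Thus there are exactly $n_\ell$ bad subsets, giving $N_{\ell-1} = \binom{E}{\ell-1} - n_\ell$ and the second case.

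The main conceptual move is recognizing that broken circuits package this combinatorics cleanly; once done, the only things to verify are the length bound on broken circuits and the bijection in the previous paragraph, both of which are routine under the girth hypothesis. Working directly from the Whitney rank-nullity expansion $P_G(\la) = \sum_{S \subseteq E}(-1)^{|S|}\la^{c(S)}$ would instead require ruling out contributions of spanning subgraphs with cyclomatic number $\ge 2$ to the coefficient of $\la^{v-i}$ for $i \le \ell - 1$, which in turn demands an edge-count estimate based on girth (any subgraph with cyclomatic number $k \ge 1$ and girth $\ge \ell$ contains at least $\ell$ edges, with extra care needed to handle the $k = 2$, $i = \ell - 2$ borderline). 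Either route works, but the broken circuit approach is the one I would present.
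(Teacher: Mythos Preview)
Your broken-circuit argument is correct. The paper does not supply its own proof of this statement; it is quoted directly from \cite{Meredith1}. The remark immediately following the theorem, however, reveals that Meredith's original argument proceeds via the spanning-subgraph expansion $P_G(\la)=\sum_{S\subseteq E}(-1)^{|S|}\la^{c(S)}$, counting cycle-containing subgraphs by their number of connected components --- precisely the alternative route you outline in your last paragraph. Your primary approach via broken circuits is different and in fact cleaner: once the girth hypothesis forces $G$ to be simple, the bijection between $\ell$-cycles and broken circuits of size $\ell-1$ is immediate, and the multiplicity issue flagged in the remark never arises. One small wording point: in describing the inverse bijection, the uniqueness of $e$ comes solely from simplicity of $G$ (at most one edge between the endpoints of the path $B$); that this $e$ is larger than every edge of $B$ is then a consequence of $B$ being a broken circuit, not an additional constraint needed to single $e$ out.
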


\begin{rem}
	The statement of this result in \cite[Theorem 2]{Meredith1} is not explicitly restricted to graphs with $\ell>2$. In the case $i = \ell-1$, the proof contains an assumption that the number of cycle-containing subgraphs with $v-1$ connected components and $t$ edges is zero for $t > 2$; this is not true for graphs with edge multiplicities greater or equal to 3.
\end{rem}

\begin{figure}[h!]
	\centering
	\begin{subfigure}[b]{0.15\textwidth}
		\centering
		\includegraphics[width=0.56\linewidth]{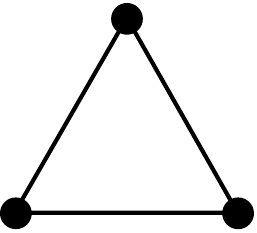}
		\\
		$T_1$
	\end{subfigure}
	\begin{subfigure}[b]{0.15\textwidth}
		\centering
		\includegraphics[width=0.56\linewidth]{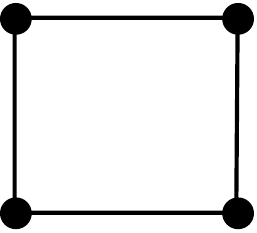}
		\\
		$T_2$
	\end{subfigure}
	\begin{subfigure}[b]{0.15\textwidth}
		\centering
		\includegraphics[width=0.56\linewidth]{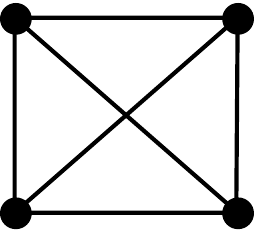}
		\\
		$T_3$
	\end{subfigure}
	\begin{subfigure}[b]{0.15\textwidth}
		\centering
		\includegraphics[width=0.56\linewidth]{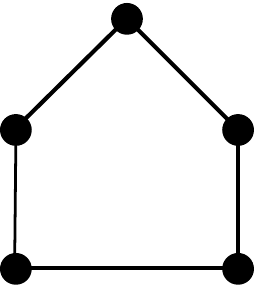}
		\\
		$T_4$
	\end{subfigure}
	\begin{subfigure}[b]{0.15\textwidth}
		\centering
		\includegraphics[width=0.56\linewidth]{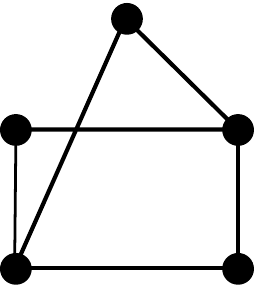}
		\\
		$T_5$
	\end{subfigure}

	\vspace{5mm}
	
	\begin{subfigure}[b]{0.15\textwidth}
		\centering
		\includegraphics[width=0.56\linewidth]{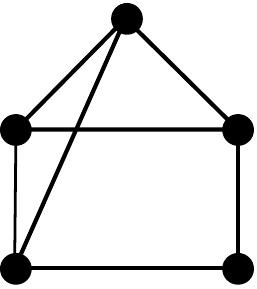}
		\\
		$T_6$
	\end{subfigure}
	\begin{subfigure}[b]{0.15\textwidth}
		\centering
		\includegraphics[width=0.56\linewidth]{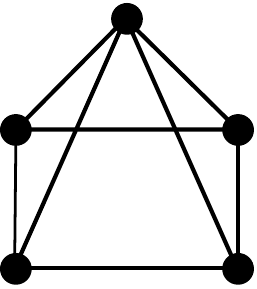}
		\\
		$T_7$
	\end{subfigure}
	\begin{subfigure}[b]{0.15\textwidth}
		\centering
		\includegraphics[width=0.56\linewidth]{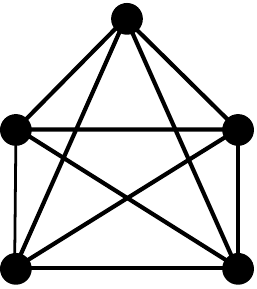}
		\\
		$T_8$
	\end{subfigure}
	\begin{subfigure}[b]{0.15\textwidth}
		\centering
		\includegraphics[width=0.7\linewidth]{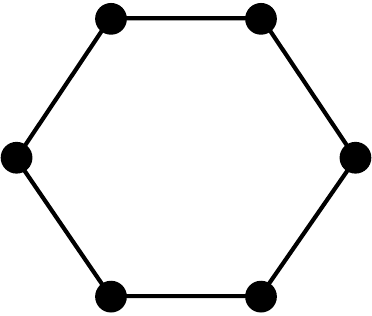}
		\\
		$T_9$
	\end{subfigure}
	\begin{subfigure}[b]{0.15\textwidth}
		\centering
		\includegraphics[width=0.56\linewidth]{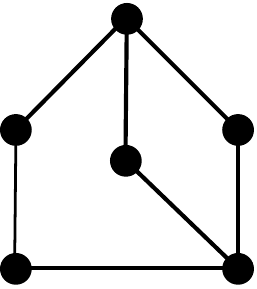}
		\\
		$T_{10}$
	\end{subfigure}
	
	\vspace{5mm}
	
	\begin{subfigure}[b]{0.15\textwidth}
		\centering
		\includegraphics[width=0.7\linewidth]{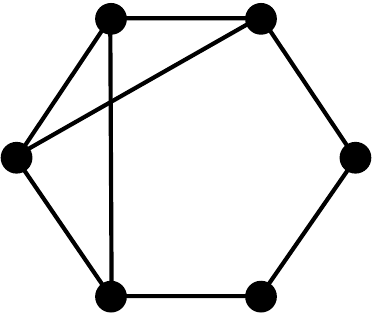}
		\\
		$T_{11}$
	\end{subfigure}
	\begin{subfigure}[b]{0.15\textwidth}
		\centering
		\includegraphics[width=0.7\linewidth]{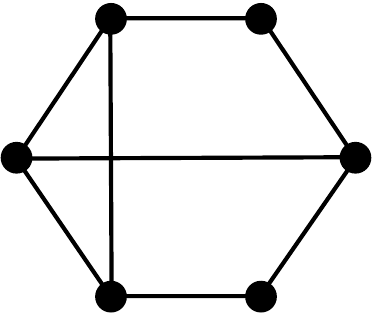}
		\\
		$T_{12}$
	\end{subfigure}
	\begin{subfigure}[b]{0.15\textwidth}
		\centering
		\includegraphics[width=0.7\linewidth]{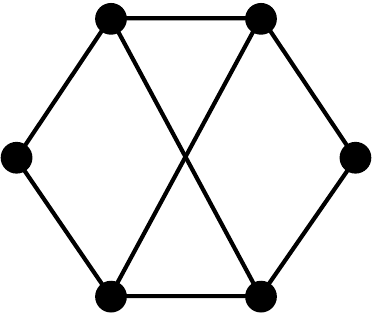}
		\\
		$T_{13}$
	\end{subfigure}
	\begin{subfigure}[b]{0.15\textwidth}
		\centering
		\includegraphics[width=0.8\linewidth]{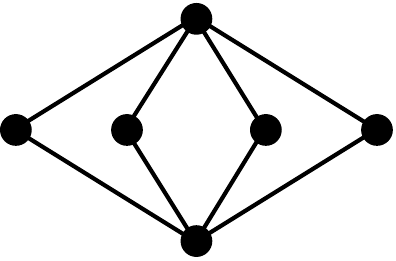}
		\\
		$T_{14}$
	\end{subfigure}
	\begin{subfigure}[b]{0.15\textwidth}
		\centering
		\includegraphics[width=0.7\linewidth]{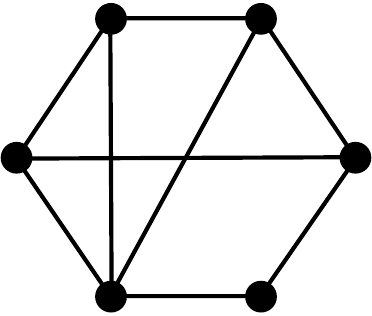}
		\\
		$T_{15}$
	\end{subfigure}
	
	\vspace{5mm}

	\begin{subfigure}[b]{0.15\textwidth}
		\centering
		\includegraphics[width=0.7\linewidth]{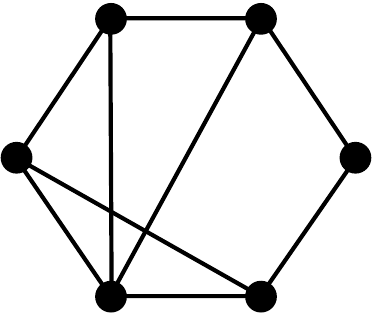}
		\\
		$T_{16}$
	\end{subfigure}    
	\begin{subfigure}[b]{0.15\textwidth}
		\centering
		\includegraphics[width=0.7\linewidth]{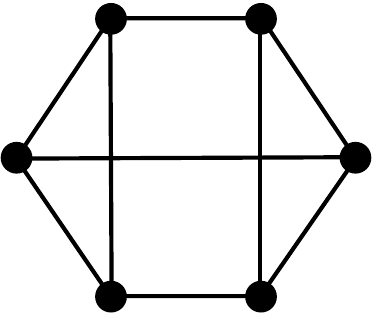}
		\\
		$T_{17}$
	\end{subfigure}
	\begin{subfigure}[b]{0.15\textwidth}
		\centering
		\includegraphics[width=0.7\linewidth]{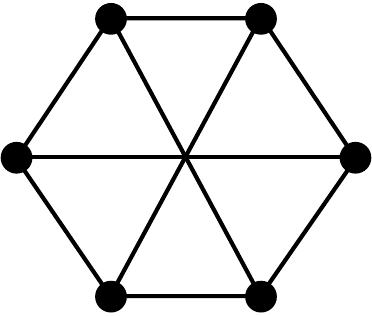}
		\\
		$T_{18}$
	\end{subfigure}
	\begin{subfigure}[b]{0.15\textwidth}
		\centering
		\includegraphics[width=0.7\linewidth]{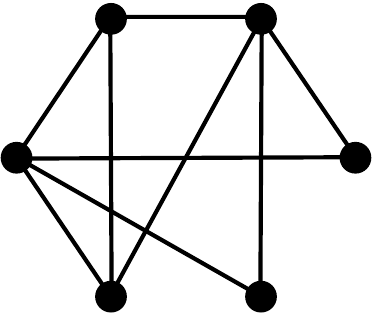}
		\\
		$T_{19}$
	\end{subfigure}
	\begin{subfigure}[b]{0.15\textwidth}
		\centering
		\includegraphics[width=0.7\linewidth]{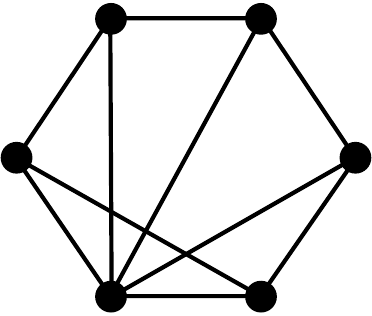}
		\\
		$T_{20}$
	\end{subfigure}
	\caption{Graphs $T_{1}$ through $T_{20}$ involved in the computation of the 5th and 6th coefficients of the chromatic polynomial \cite{Bielak1}.}\label{tgraphs1}
\end{figure} 

\begin{thm}\label{FarBie}\cite{Farrell, Bielak1}
	Let $G$ be a graph with $v$ vertices, $E$ edges, $t_1$ triangles, $t_2$ induced 4-cycles, and $t_3$ complete graphs of order 4. The first four coefficients of the chromatic polynomial $P_G(\la)$ are given by the following formulas: $c_v = 1$, $c_{v-1} = -E$, $c_{v-2} = \displaystyle\binom{E}{2}-t_1$, and 
	
	\begin{equation*}\label{chC3}
	    c_{v-3} = -\displaystyle\binom{E}{3}+(E-2)t_1+t_2-2t_3
	\end{equation*}

	The 5th and 6th coefficients are given by the following formulas, where $t_i$ is the number of induced subgraphs of $G$ isomorphic to graphs $T_i$ as shown in Figures \ref{tgraphs1} and  \ref{tgraphs2}.
	\begin{equation*}\label{chC4}
	c_{v-4} =\binom{E}{4} - \binom{E-2}{2}t_1 + \binom{t_1}{2} - (E-3)t_2 -(2E-9)t_3
- t_4 +t_5 + 2t_6+3t_7-6t_8	\end{equation*}
		\begin{eqnarray*}\label{chC5}
	c_{v-5} &=& -\binom{E}{5} + \binom{E-2}{3}t_1 - (E-4)\binom{t_1}{2} + \binom{E-3}{2}t_2 - (t_2-2t_3)t_1 - (E^2-10E+30)t_3
	\\&&+t_4 -(E-3)t_5-2(E-5)t_6-3(q-6)t_7+6(E-8)t_8+t_9-t_{10}
	-2t_{11}-2t_{12}-t_{13}\\&&+t_{14}
	-t_{15}-3t_{16}-4t_{17}-4t_{18}+2t_{19}-4t_{20}-t_{21}+4t_{22}
	+3t_{23}+4t_{24}+5t_{25}+4t_{26}\\&&+6t_{27}+8t_{28}
	+16t_{29}+12t_{30}-24t_{31}
	\end{eqnarray*}
\end{thm}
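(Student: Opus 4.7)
The plan is to prove these formulas using Whitney's subgraph expansion of the chromatic polynomial,
\[
P_G(\la) = \sum_{S \subseteq E(G)} (-1)^{|S|}\,\la^{k(S)},
\]
where $k(S)$ is the number of connected components of the spanning subgraph $(V(G),S)$. Since $k(S) = v - |S| + p_1(S)$, the coefficient of $\la^{v-i}$ is the signed count of edge subsets with $|S|-p_1(S) = i$. Grouping by the isomorphism type of the subgraph formed by $S$ reduces the computation of $c_{v-i}$ to a finite inventory of small graphs of rank $i$, weighted by the number of times each appears in $G$.

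The first three coefficients follow immediately from this framework: rank $0$ only admits $S=\emptyset$; rank $1$ admits each single edge; rank $2$ admits every pair of edges ($\binom{E}{2}$ in total, all of rank $2$ since a simple graph has girth at least $3$) together with every triangle ($t_1$ in total, carrying sign $-1$ because triangles contain three edges), giving $c_{v-2}=\binom{E}{2}-t_1$. For $c_{v-3}$ I would list the rank-$3$ edge subsets by component structure: any non-triangle $3$-edge subset (sign $-1$); the three $4$-edge one-cycle configurations, namely $4$-cycles, triangle-with-pendant, and triangle-plus-disjoint-edge (all with sign $+1$); the $5$-edge graph $K_4-e$ (sign $-1$); and $K_4$ itself (sign $+1$). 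The number of $4$-cycles in $G$ expands as $t_2+I(K_4-e)+3t_3$, and the number of $5$-edge rank-$3$ subsets expands as $I(K_4-e)+6t_3$, where $I(K_4-e)$ denotes induced $K_4-e$ subgraphs; the auxiliary term $I(K_4-e)$ cancels between these two contributions. The triangle-with-extra-edge configurations (pendant plus disjoint) are in bijection with ordered pairs (triangle, edge not belonging to the triangle), accounting for $(E-3)t_1$ subsets. Combining the signs produces $c_{v-3} = -\binom{E}{3}+(E-2)t_1 + t_2 - 2t_3$.

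For $c_{v-4}$ and $c_{v-5}$ the same framework applies, but the enumeration is substantially richer. One catalogues every isomorphism class of rank-$4$ or rank-$5$ edge subset; these arise as disjoint unions of the connected graphs $T_1,\ldots,T_{31}$ of Figures \ref{tgraphs1} and \ref{tgraphs2} together with stray rank-$1$ components, each contributing $(-1)^{|E|}$ times its count in $G$. Expressing every (non-induced) subgraph count in terms of the induced-subgraph counts $t_j$ requires inverting a triangular system, which produces the quadratic terms $\binom{t_1}{2}$, $(t_2-2t_3)t_1$, and the remaining $t_j$ combinations in the stated polynomials. The main obstacle is the bookkeeping: assigning each isomorphism class its correct sign and multiplicity inside every larger $T_{j'}$ that contains it, and tracking disconnected edge subsets of mixed rank without double-counting. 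This is precisely the enumeration carried out in \cite{Farrell,Bielak1}, and verification reduces to confirming that the signed sum over all rank-$4$ and rank-$5$ configurations simplifies to the stated identities.
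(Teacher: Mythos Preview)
The paper does not supply its own proof of this theorem: it is quoted verbatim from \cite{Farrell,Bielak1} and used as input for later computations. Your outline via Whitney's subgraph expansion is exactly the method those references employ, and your derivation of $c_v,\ldots,c_{v-3}$ is correct; for $c_{v-4}$ and $c_{v-5}$ you appropriately defer the large but routine bookkeeping to those same sources, so there is nothing further to compare.
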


\begin{figure}[!ht]
	\centering
	\begin{subfigure}[b]{0.15\textwidth}
		\centering
		\includegraphics[width=0.7\linewidth]{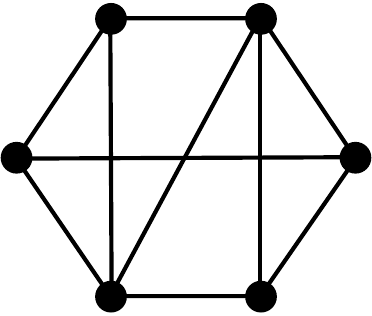}
		\\
		$T_{21}$
	\end{subfigure}
	\begin{subfigure}[b]{0.15\textwidth}
		\centering
		\includegraphics[width=0.8\linewidth]{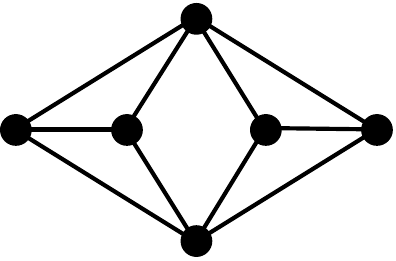}
		\\
		$T_{22}$
	\end{subfigure}
	\begin{subfigure}[b]{0.15\textwidth}
		\centering
		\includegraphics[width=0.8\linewidth]{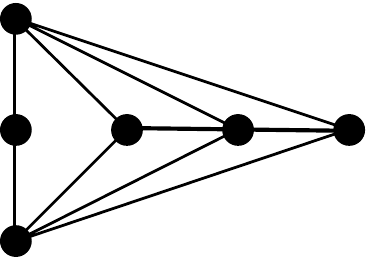}
		\\
		$T_{23}$
	\end{subfigure}
	\begin{subfigure}[b]{0.15\textwidth}
		\centering
		\includegraphics[width=0.7\linewidth]{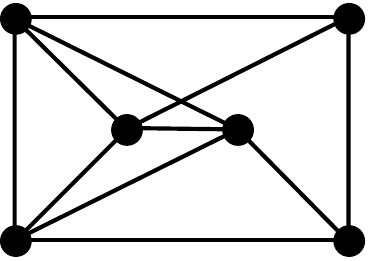}
		\\
		$T_{24}$
	\end{subfigure}    
	\begin{subfigure}[b]{0.15\textwidth}
		\centering
		\includegraphics[width=0.8\linewidth]{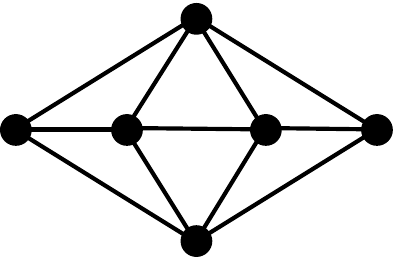}
		\\
		$T_{25}$
	\end{subfigure}
	
	\vspace{5mm}
	
	\begin{subfigure}[b]{0.15\textwidth}
		\centering
		\includegraphics[width=0.7\linewidth]{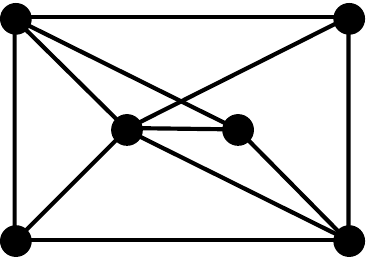}
		\\
		$T_{26}$
	\end{subfigure}
	\begin{subfigure}[b]{0.15\textwidth}
		\centering
		\includegraphics[width=0.7\linewidth]{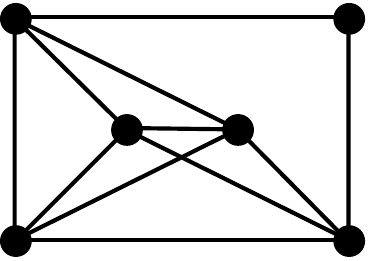}
		\\
		$T_{27}$
	\end{subfigure}
	\begin{subfigure}[b]{0.15\textwidth}
		\centering
		\includegraphics[width=0.7\linewidth]{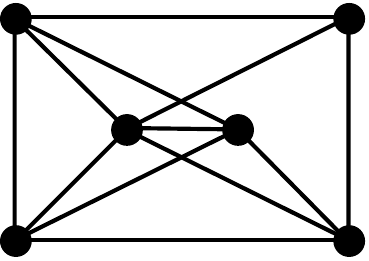}
		\\
		$T_{28}$
	\end{subfigure}
	\begin{subfigure}[b]{0.15\textwidth}
		\centering
		\includegraphics[width=0.7\linewidth]{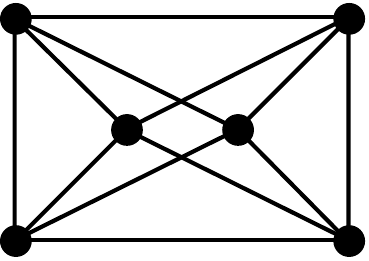}
		\\
		$T_{29}$
	\end{subfigure}
	\begin{subfigure}[b]{0.15\textwidth}
		\centering
		\includegraphics[width=0.7\linewidth]{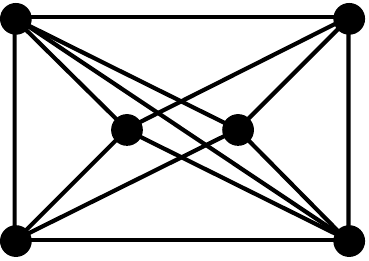}
		\\
		$T_{30}$
	\end{subfigure}
	\begin{subfigure}[b]{0.15\textwidth}
		\centering
		\includegraphics[width=0.7\linewidth]{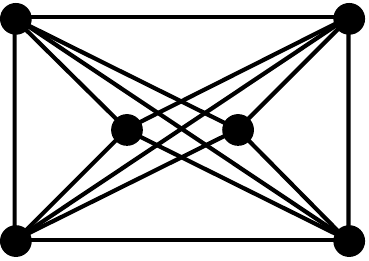}
		\\
		$T_{31}$
	\end{subfigure}
	\caption{Graphs $T_{21}$ through $T_{31}$ involved in the computation of the 5th and 6th coefficients of the chromatic polynomial \cite{Bielak1}.}\label{tgraphs2}
	
\end{figure}


\subsection{Khovanov and chromatic homology and their relations}\label{KhChCoeff}


The Jones polynomial has been categorified as the Euler characteristic of a bigraded homology theory known as Khovanov homology. We denote the Khovanov homology of a link by $Kh(L)$. The chromatic polynomial has a similar categorification known as chromatic graph homology. An overview of these homologies and their construction can be found in \cite{LS}, \cite{SaSco}. In this paper, we will use only the version of chromatic homology defined over $\A_2 = \Z[x]/(x^2)$ and will refer to it as $H_{\A_2}(G)$. Since $H_{\A_2}$ contains only  $\Z_2$ torsion \cite{LS}, we introduce the following notation.

\begin{defn}\label{2torsion}
If $H$ is a subgroup of either Khovanov or chromatic homology, $\tor H$ denotes the order 2 torsion subgroup of $H$. We use $\rktor H$ to indicate the number of copies of $\Z_2$.
\end{defn}

There is a partial correspondence between Khovanov homology of a link and the chromatic homology $H_{\A_2}$ of an associated graph.

\begin{thm}\cite{Przy1, PS} \label{Correspondence}
	Let $D$ be an oriented diagram of link $L$ with $c_-$ negative crossings and $c_+$ positive crossings. Suppose $G_+(D)$ has $v$ vertices and positive girth $\ell$. Let $p = i - c_-$ and $q = v - 2j + c_+ - 2c_-$. For $0 \le i < \ell$ and $j \in \Z$, there is an isomorphism $$H_{\A_2}^{i,j}(G_+(D)) \cong Kh^{p,q}(L).$$ Additionally, for all $j \in \Z$, there is an isomorphism of torsion: $\tor H_{\A_2}^{\ell,j}(G_+(D)) \cong \tor Kh^{\ell-c_-,q}(L).$
\end{thm}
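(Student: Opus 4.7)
The plan is to prove Theorem~\ref{Correspondence} by a direct chain-level comparison: match the cube of resolutions that computes $Kh(L)$ from the diagram $D$ with the standard cube computing $H_{\A_2}(G_+(D))$, and then invoke the girth hypothesis to see that the two complexes agree through a low range of homological degrees.

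First I would identify the two cubes vertex-by-vertex. The all-positive Kauffman state $s_+(D)$ consists of $v$ circles that are in bijection with the vertices of $G_+(D)$, and each crossing of $D$ corresponds to an edge of $G_+(D)$. A vertex of the Khovanov cube with $i$ negative resolutions corresponds to a subset $s \subseteq E(G_+(D))$ with $|s|=i$; its Kauffman state has one circle per connected component of $(V(G_+(D)), s)$, and the attached chain module is $\A_2^{\otimes c(s)}$, where $c(s)$ is the number of components. This matches the chromatic-homology chain module at the corresponding subset, and the global bigrading shift in the statement ($p = i - c_-$ and $q = v - 2j + c_+ - 2c_-$) is precisely Khovanov's normalization relative to the all-positive corner.

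Next I would compare the per-edge maps of the two cubes. Both differentials are signed sums of local maps, one for each cube arrow. At a cube arrow from $s$ to $s \cup \{e\}$, two cases arise. If $e$ connects distinct components of $(V, s)$, both theories use the Frobenius multiplication $m : \A_2 \otimes \A_2 \to \A_2$ and the local maps agree. If instead both endpoints of $e$ already lie in the same component, Khovanov uses the comultiplication $\Delta : \A_2 \to \A_2 \otimes \A_2$ (splitting the corresponding circle), while chromatic homology uses a map that does not in general agree with $\Delta$. Now invoke the girth hypothesis: any subset $s$ with $|s| \le \ell - 1$ is a forest, because every cycle in $G_+(D)$ uses at least $\ell$ edges, so every edge added to such an $s$ merges two distinct components. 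Hence the two chain complexes coincide identically through homological degree $\ell$, and their cohomologies coincide in degrees $0 \le i \le \ell - 1$, which is the first assertion.

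For the torsion isomorphism at $i = \ell$, the chain modules in every homological degree agree, and only the outgoing differential $C^\ell \to C^{\ell+1}$ can disagree between the two theories. A careful comparison of $\Delta$ with the chromatic local map at each discrepant cube arrow shows that the difference between the two chain maps lands in the free part of the cokernel, so the $\Z_2$-torsion of $H^\ell$ is unaffected by replacing one differential with the other. The main obstacle, and the step I expect to be genuinely delicate, is precisely this torsion-comparison: one must track signs, bigrading shifts, and separate free from $\tor$ contributions finely enough to conclude that the torsion subgroups match even when the full groups need not.
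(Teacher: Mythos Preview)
The paper does not prove Theorem~\ref{Correspondence}; it is quoted from \cite{Przy1, PS} as background and no proof appears in this paper, so there is nothing here to compare your sketch against directly.

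That said, your outline has a genuine gap. You assert that ``the chain modules in every homological degree agree'' and that the two cubes ``coincide identically through homological degree~$\ell$.'' This is false. Once $|s|\ge \ell$ the subset $s$ may contain a cycle, and then the number of circles in the Kauffman state is \emph{strictly larger} than the number of connected components of $(V(G_+(D)),s)$. Concretely, if $s$ is the edge set of an $\ell$-cycle, the spanning subgraph $(V,s)$ has $v-\ell+1$ components, while the corresponding Kauffman state has $v-\ell+2$ circles (the last edge of the cycle closes up on a single circle and \emph{splits} it). Thus the Khovanov summand is $\A_2^{\otimes(v-\ell+2)}$ whereas the chromatic summand is $\A_2^{\otimes(v-\ell+1)}$. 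So the chain groups agree only for $0\le i\le \ell-1$, and the differentials only for $d^0,\dots,d^{\ell-2}$; already $d^{\ell-1}$ lands in groups that differ. Your argument therefore establishes the isomorphism $H_{\A_2}^{i,j}\cong Kh^{p,q}$ only for $0\le i\le \ell-2$, not for $i=\ell-1$, and the torsion claim at $i=\ell$ rests on the same false premise. (Relatedly, your sentence ``every edge added to such an $s$ merges two distinct components'' is wrong for $|s|=\ell-1$: adding the final edge of an $\ell$-cycle does not merge.) The proofs in the cited references handle the boundary degrees $i=\ell-1$ and $i=\ell$ by a separate and more delicate comparison, not by a naive identification of the chain groups.
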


Chromatic homology $H_{\A_2}(G)$ is always homologically thin (all non-trivial homology lies on two diagonals). If $Kh(L)$ is homologically thin, then $Kh(L)$ also contains only $\Z_2$ torsion \cite{Shum2}.

\begin{thm}\cite{LS} \label{ChromaticDetermined}
The chromatic homology $H_{\A_2}(G)$ with coefficients in $\Z$ is entirely determined by the chromatic polynomial $P_G(\la)$.
\end{thm}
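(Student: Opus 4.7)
The strategy is to reconstruct the bigraded integral group $H_{\A_2}(G)$ from the chromatic polynomial by exploiting two structural features of chromatic homology over $\A_2$: it is homologically thin (supported on two adjacent diagonals), and its torsion subgroup consists exclusively of copies of $\Z_2$.

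First, I would invoke the categorification identity, namely that the graded Euler characteristic $\sum_{i,j}(-1)^i q^j \rk H_{\A_2}^{i,j}(G)$ equals $P_G(\la)$ after a suitable substitution of $\la$ in terms of $q$. Using thinness, the nonzero bigradings lie on two diagonals of the form $j-2i=s$ and $j-2i=s+1$, so along each diagonal $j$ changes by two as $i$ changes by one, while the two diagonals themselves sit in opposite parities of $j$. Thus the free ranks $a_i=\rk H_{\A_2}^{i,2i+s}(G)$ and $b_i=\rk H_{\A_2}^{i,2i+s+1}(G)$ contribute to monomials of the Poincar\'e polynomial of different parity in $q$, and can be read off individually as coefficients of $P_G(\la)$.

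To pin down the torsion, I would run the same argument with coefficients in $\Z_2$. Thinness is preserved after base change, so $\dim_{\Z_2} H_{\A_2}^{i,j}(G;\Z_2)$ is likewise determined by $P_G(\la)$. Combining the universal coefficient theorem with the $\Z_2$-only torsion hypothesis gives
\[
\dim_{\Z_2} H_{\A_2}^{i,j}(G;\Z_2) = \rk H_{\A_2}^{i,j}(G) + \rktor H_{\A_2}^{i,j}(G) + \rktor H_{\A_2}^{i+1,j}(G),
\]
from which one solves for $\rktor H_{\A_2}^{i,j}(G)$ inductively on decreasing $i$, starting at the top homological grading where the last term vanishes. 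Since each torsion summand is $\Z_2$, knowing its rank already determines its isomorphism type, and this completes the reconstruction of $H_{\A_2}(G)$ from $P_G(\la)$.

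The main obstacle is that the plan rests on two structural properties that cannot be read off $P_G(\la)$ itself: homological thinness of $H_{\A_2}(G)$ and the fact that all torsion is $\Z_2$. Establishing these typically requires a direct analysis of the chromatic chain complex --- for instance via a long exact sequence coming from edge deletion--contraction and an induction on the number of edges, together with a verification that the differentials can only create $2$-torsion. Once these two ingredients are granted, the Euler-characteristic-plus-UCT argument above delivers the theorem without further input.
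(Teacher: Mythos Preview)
The paper does not give its own proof of this statement; it is quoted from \cite{LS}. So there is no paper proof to compare against directly, but your proposal can still be assessed on its own merits, and it has a genuine gap.

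Your key step is the claim that the two diagonals of $H_{\A_2}(G)$ have the form $j-2i=s$ and $j-2i=s+1$, so that they occupy opposite parities of $j$ and can be separated from the graded Euler characteristic. That is the Khovanov diagonal convention, not the chromatic one. For chromatic homology over $\A_2$ the support lies on $i+j=v$ and $i+j=v-1$ (see Tables~\ref{tablechromproof} and \ref{exampletable2}). In a fixed grading $j=v-i$ both $H_{\A_2}^{i,v-i}(G)$ and $H_{\A_2}^{i-1,v-i}(G)$ contribute, with opposite signs, so the Euler characteristic only records their difference --- precisely Equation~\eqref{RankDiff} in the paper --- and your parity argument does not separate them. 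The $\Z_2$ step fails for the same reason: the $\Z_2$ Euler characteristic coincides with the integral one, so it yields no additional equations to feed into the universal coefficient theorem.

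What is actually needed, in addition to thinness and the $\Z_2$-only torsion fact, is the knight move isomorphism of \cite{CCR} (invoked in the proof of Theorem~\ref{rankHGRgirth}), which gives $\rk H_{\A_2}^{i-1,v-i}(G)=\rk H_{\A_2}^{i-2,v-(i-2)}(G)$. Substituting this into the Euler-characteristic relation turns it into a first-order recursion along the lower diagonal, determining all free ranks from $P_G(\la)$; an accompanying shift then pins down the $\Z_2$ torsion. Your overall architecture (thinness, only $\Z_2$ torsion, reconstruct from the decategorification) is the right one, but the knight move must replace the parity separation for the argument to go through.
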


Note that $P_G(\la)$ and $H_{\A_2}(G)$ are both trivial if $G$ contains a loop, and both ignore the presence of multiple edges in $G$. If $G$ is a loopless graph ($\ell(G)>1$) then both $G$ and its simplification $G'$ have the same chromatic invariants: $P_G(\la) = P_{G'}(\la)$ and $H_{\A_2}(G) = H_{\A_2}(G')$. If $\ell(G)>2$, then we also have $G=G'$. 

Theorem \ref{Correspondence} allows us to compute explicit formulae for extremal gradings of Khovanov homology, subject to combinatorial conditions on the Kauffman state of a link diagram. In \cite{AP,PPS,PS}, the following gradings of Khovanov homology are explicitly computed for diagrams when the isomorphism theorem holds.

\begin{prop}[\cite{PPS,PS}] \label{PS}
	Let $D$ be a diagram of $L$ with $c_+$ positive crossings, $c_-$ negative crossings, and $|s_+|$ circles in the all-positive Kauffman state of $D$. Let $N = -|s_+|+c_{+}-2c_{-}$ and let $p_1, t_1$ denote the cyclomatic number and number of triangles in $G_+(D)'$, respectively.
	
	If the girth of $G_+(D)$ is at least 2, then extreme Khovanov homology groups are given by:
 \begin{eqnarray*}
    	&&Kh^{-c_{-},N}(L) = \Z\hspace{1cm}
		Kh^{-c_{-},N+2}(L) = \begin{cases}
			\Z & G_+(D) \text{ bipartite}\\
			0 & \text{ otherwise}
		\end{cases} \\
		&& Kh^{1-c_{-},N+2}(L) =\begin{cases}
			\Z^{p_1} & G_+(D) \text{ bipartite}\\
			\Z^{p_1-1} \oplus \Z_2 & \text{ otherwise}
		\end{cases}
		 \end{eqnarray*}
If, in addition, the girth of $G_+(D)$ is at least 3, then we have an additional grading in Khovanov homology:
		 \begin{eqnarray*}
		Kh^{2-c_{-},N+4}(L)  &=&  \begin{cases}
			\Z^{\binom{p_1}{2}} \oplus \Z_2^{p_1} & G_+(D) \text{ bipartite}\\
			\Z^{\binom{p_1}{2}-t_1+1} \oplus \Z_2^{p_1-1}& \text{ otherwise}
		\end{cases}
 \end{eqnarray*}
\end{prop}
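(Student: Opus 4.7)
The plan is to use Theorem~\ref{Correspondence} to translate each of the four Khovanov groups in question into a chromatic homology group of $G := G_+(D)$, and then to extract the answer from the structure of $H_{\A_2}(G)$ recorded earlier in this section. Setting $v = |s_+|$, so that $N = -v + c_+ - 2c_-$, and inverting the quantum-degree relation $q = v - 2j + c_+ - 2c_-$, one finds that the Khovanov bidegrees $(-c_-, N)$, $(-c_-, N+2)$, $(1-c_-, N+2)$, $(2-c_-, N+4)$ match, in order, the chromatic bidegrees $(0, v)$, $(0, v-1)$, $(1, v-1)$, $(2, v-2)$ in $H_{\A_2}(G)$. All four homological indices satisfy $i < \ell$ under the girth hypotheses ($\ell \ge 2$ for the first three, $\ell \ge 3$ for the last), so Theorem~\ref{Correspondence} applies.

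Next, because $H_{\A_2}(G)$ is thin (supported on the two diagonals $i + j = v$ and $i + j = v - 1$) and is entirely determined by $P_G(\la)$ (Theorem~\ref{ChromaticDetermined}), the free ranks can be read off the graded Euler characteristic, which equals $P_G(1+q)$. Under the girth hypothesis $\ell \ge 2$ (respectively $\ell \ge 3$), Meredith's Theorem~\ref{Meredith} (respectively Theorem~\ref{FarBie}) makes the coefficients $c_{v-k}$ of $P_G(\la)$ combinatorially explicit; substituting $\la \mapsto 1 + q$, expanding binomially, and using $p_1 = E - v + 1$ yields each free rank appearing in the statement.

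The base-case bidegrees $(0, v)$ and $(0, v-1)$ I would compute directly in the Helme-Guizon--Rong chain complex. The top generator $x^{\otimes v}$ of $C^{0, v}$ is a cocycle with no coboundary, so $H^{0, v}(G) = \Z$. A chain $\sum_u a_u \xi_u \in C^{0, v-1}$ (where $\xi_u$ carries the $1$-label at vertex $u$ and $x$ elsewhere) is a cocycle iff the coefficients satisfy a linear relation across every edge $\{u, w\}$ which admits a nonzero integer solution precisely when $G$ is bipartite; hence $H^{0, v-1}(G) = \Z$ in the bipartite case and $0$ otherwise. With these two base cases fixed, the Euler characteristic at $q^{v-1}$ and $q^{v-2}$ then determines the remaining free ranks at $H^{1, v-1}$ and $H^{2, v-2}$.

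The main obstacle is the $\Z_2$ torsion, which is what distinguishes $\Z_2^{p_1 - 1}$ from $\Z_2^{p_1}$ and forces the extra $\pm 1$ shift in the bipartite correction of $H^{2, v-2}$. Here one must analyze the coboundary image at the chain level: in the non-bipartite case the map $d : C^{0, v-1} \to C^{1, v-1}$ factors through an index-$2$ subgroup of a natural target summand, contributing one $\Z_2$, and an iterative analysis driven by the cycle structure of $G$ (or equivalently a cycle-by-cycle deletion argument) extracts the total number of $\Z_2$ summands at homological degrees $1$ and $2$. This enumeration is carried out in \cite{PPS, PS}, whose results I would cite rather than reprove.
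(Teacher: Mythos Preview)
The paper does not give its own proof of this proposition: it is stated as a result of \cite{PPS,PS} and used as input for the subsequent theorems. So there is no ``paper's proof'' to compare your proposal against; your sketch is in fact an outline of the argument carried out in those references, and it is essentially correct.

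Two small remarks. First, your invocation of Meredith's Theorem~\ref{Meredith} under the hypothesis $\ell(G_+(D)) \ge 2$ is not quite right, since that theorem is only stated for $\ell > 2$. The fix is already built into the statement you are proving: $p_1$ and $t_1$ are defined as invariants of the \emph{simplification} $G_+(D)'$, and by the remarks following Theorem~\ref{ChromaticDetermined} one has $H_{\A_2}(G_+(D)) = H_{\A_2}(G_+(D)')$ whenever $\ell(G_+(D)) \ge 2$. So you should pass to $G_+(D)'$ (which is simple) before appealing to Theorem~\ref{FarBie} for the chromatic coefficients. Second, since your treatment of the torsion subgroup ultimately defers to \cite{PPS,PS}, the argument is not fully self-contained; that is fine here, since the proposition in this paper is itself a citation, but be aware that the torsion count is where the real content lies and is not recoverable from Euler-characteristic bookkeeping alone.
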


The following result is a restatement of \cite[Thm. 5.4]{SaSco}, describing the fourth and fifth homological gradings of $Kh(L)$ in terms of the associated graph.

\begin{thm}\cite{SaSco}\label{rankKhold}
Let $D$ be a diagram of $L$ as in Proposition \ref{PS}. Using conventions from Theorem \ref{FarBie} under the assumption that $G_+(D)$ has girth at least 4 we have the following gradings in Khovanov homology: 
\begin{eqnarray*}
\rk Kh^{3-c_-,N+6}(L) = \rk \tor Kh^{4-c_-,N+8}(L) = \begin{cases}
p_1 + \binom{p_1+1}{3}-t_2 & G_+(D) \text{ bipartite,}\\
p_1 + \binom{p_1+1}{3}-t_2-1 & \text{ otherwise.}
\end{cases}
\end{eqnarray*}
\end{thm}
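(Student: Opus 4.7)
The plan is to pass from Khovanov to chromatic homology via Theorem \ref{Correspondence}, exploit the thin structure of $H_{\A_2}$ to pair the rank at homological grading $3$ with the torsion rank at homological grading $4$, and then extract the explicit formula from the chromatic polynomial via Theorem \ref{ChromaticDetermined} together with Theorem \ref{FarBie}.

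First I would match the bigradings. Since $|s_+| = v$, one has $N = -v + c_+ - 2c_-$, so the relation $q = v - 2j + c_+ - 2c_-$ in Theorem \ref{Correspondence} gives $j = v - 3$ at $q = N + 6$ and $j = v - 4$ at $q = N + 8$. Because $\ell \ge 4$, the full isomorphism applies at homological grading $3 < \ell$, producing $\rk Kh^{3-c_-, N+6}(L) = \rk H_{\A_2}^{3, v-3}(G_+(D))$; at grading $4$, either the full isomorphism (if $\ell > 4$) or the boundary torsion isomorphism (if $\ell = 4$) gives $\rk \tor Kh^{4-c_-, N+8}(L) = \rk \tor H_{\A_2}^{4, v-4}(G_+(D))$.

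The first equality of the theorem should then fall out of the two-diagonal structure of $H_{\A_2}$: outside the ``signature'' diagonal handled separately in Proposition \ref{PS}, each free $\Z$ summand in bigrading $(i, j)$ pairs with a $\Z_2$ summand in bigrading $(i+1, j-2)$ by \cite{LS}. Specializing to $(i, j) = (3, v - 3)$ yields $\rk H_{\A_2}^{3, v-3}(G_+(D)) = \rk \tor H_{\A_2}^{4, v-4}(G_+(D))$, and hence the desired equality of the two Khovanov ranks.

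To extract the explicit formula, I would invoke Theorem \ref{ChromaticDetermined}: since $H_{\A_2}$ is determined by $P_G$, it suffices to compute $\rk H_{\A_2}^{3, v-3}(G_+(D))$ from $c_v, c_{v-1}, c_{v-2}, c_{v-3}$. Under $\ell \ge 4$, Theorem \ref{FarBie} simplifies drastically: $t_1 = 0$, $t_3 = 0$, and $c_{v-3} = -\binom{E}{3} + t_2$. Equating the graded Euler characteristic of $H_{\A_2}(G_+(D))$ (which recovers $P_G$ under the standard substitution) with these coefficients, and feeding in the ranks at $i = 0, 1, 2$ already computed in Proposition \ref{PS}, isolates the rank at $(3, v - 3)$. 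Substituting $E = p_1 + v - 1$ from Definition \ref{p1} and simplifying the resulting binomials should collapse the answer to $p_1 + \binom{p_1+1}{3} - t_2$, with the non-bipartite case losing one free generator to the signature diagonal exactly as in the lower gradings of Proposition \ref{PS}.

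The hardest step will be the bookkeeping in this final computation: tracking which bigrading each $c_{v-i}$ contributes to under the Euler-characteristic accounting, verifying that the knight-move torsion pairs cancel from $\chi_q(H_{\A_2})$ without corrupting the rank extraction, and confirming that the bipartite/non-bipartite $-1$ correction propagates cleanly from Proposition \ref{PS} without a fresh adjustment appearing at grading $3$.
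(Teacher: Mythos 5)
Your proposal follows essentially the same route as the paper's source for this statement (the method of \cite[Thm.~5.3--5.4]{SaSco}, reused verbatim in the proofs of Theorems~\ref{456HGR} and~\ref{rankHGRgirth} here): transfer via Theorem~\ref{Correspondence}, pair rank with $\Z_2$-torsion using the thin/knight-move structure, and extract the rank from the substituted chromatic polynomial with $t_1=t_3=0$. One cosmetic slip: in the chromatic grading convention the free summand at $(i,j)$ pairs with torsion at $(i+1,j-1)$, not $(i+1,j-2)$ --- your specialization to $\rk\tor H_{\A_2}^{4,v-4}$ is nevertheless the correct target.
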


\section{Extremal Khovanov homology computations}

In this section we rely on ideas used in Theorem \ref{rankKhold} to obtain explicit formulas for Khovanov homology in several additional extremal gradings using the formulas found in Theorem \ref{FarBie}. This approach can theoretically be extended to further groups on the diagonal using the method of \cite{Bielak1} to find formulas for additional chromatic coefficients, although this appears computationally challenging.

\begin{thm}\label{rankKhnew}
Let $D$ be a diagram of $L$ as in Proposition \ref{PS}. Suppose also that $G_+(D)$ has girth at least 5 with cyclomatic number $p_1$ and subgraphs $T_i$ denoted as in Theorem \ref{FarBie}. Let the coefficients $a_{v-4}$ and $a_{v-5}$ be as in Theorem \ref{456HGR}. Then we have the following relations in the Khovanov homology of $L$:
\begin{equation*}
 \rk Kh^{4-c_-,N+8}(L)=
 \rk \tor Kh^{5-c_-,N+10}(L) =   \begin{cases}
 \binom{p_1}{2}+ a_{v-4} & G_+(D) \text{ bipartite} \\
 \binom{p_1}{2}+ a_{v-4} + 1 & \text{ otherwise}
  \end{cases}
 \end{equation*}

If in addition, the girth of $G_+(D)$ is at least 6, then we also have the following:
 
 \begin{eqnarray*}
\rk Kh^{5-c_-,N+10}(L) &=&  \rk \tor Kh^{6-c_-,N+12}(L) =  \begin{cases}
p_1 + \binom{p_1+1}{3} - a_{v-5} & G_+(D) \text{ bipartite} \\
p_1 + \binom{p_1+1}{3}- a_{v-5} - 1 & \text{ otherwise}
 \end{cases}
\end{eqnarray*}
\end{thm}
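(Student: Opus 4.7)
The plan is to reduce the Khovanov computation to chromatic homology via Theorem \ref{Correspondence}, then exploit the two-diagonal structure of $H_{\A_2}(G_+(D))$ together with the fact (Theorem \ref{ChromaticDetermined}) that this homology is entirely determined by the chromatic polynomial.

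First, I would translate gradings. Writing $v = |s_+|$ so that $N = -v + c_+ - 2c_-$, the isomorphism of Theorem \ref{Correspondence} identifies $Kh^{k-c_-, N+2k}(L)$ with $H_{\A_2}^{k,\, v-k}(G_+(D))$ for every $k < \ell$, and it identifies $\tor Kh^{\ell-c_-, N+2\ell}(L)$ with $\tor H_{\A_2}^{\ell,\, v-\ell}(G_+(D))$. Under the girth hypothesis $\ell \ge 5$ (respectively $\ell \ge 6$), this covers all Khovanov groups appearing in the first (respectively second) formula of the theorem, via a full isomorphism in the lower homological grading and the torsion isomorphism in the higher one.

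Second, since $H_{\A_2}(G)$ is thin with only $\Z_2$ torsion, its free rank in one bidegree equals the $\Z_2$-torsion rank in the adjacent bidegree one step up the homological grading (as was already used to establish Theorem \ref{rankKhold}). This structural fact immediately yields the equalities $\rk Kh^{4-c_-, N+8}(L) = \rk \tor Kh^{5-c_-, N+10}(L)$ and $\rk Kh^{5-c_-, N+10}(L) = \rk \tor Kh^{6-c_-, N+12}(L)$, so it remains only to compute the free ranks on one diagonal.

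Third, I would appeal to Theorem \ref{ChromaticDetermined} to express these ranks in terms of the chromatic polynomial. Theorem \ref{456HGR} will package $\rk H_{\A_2}^{k,\, v-k}$ in terms of the coefficients $c_{v-k}$, and via Theorem \ref{FarBie} these in turn are expressed through the subgraph counts $t_i$; the quantities $a_{v-4}$ and $a_{v-5}$ in the present theorem absorb exactly these contributions. The binomial terms $\binom{p_1}{2}$ and $p_1 + \binom{p_1+1}{3}$ arise as the dimensions of the standard building blocks of the two-diagonal reduced complex already extracted at lower gradings in Proposition \ref{PS} and Theorem \ref{rankKhold}, and the $\pm 1$ non-bipartite correction reflects an extra generator sitting at the bottom of one diagonal, whose effect alternates sign as one moves up the diagonals.

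The main obstacle will be keeping the bipartiteness correction consistent. The off-by-one in the non-bipartite case propagates through every homological grading of the reduced two-diagonal chain complex and switches sign as the diagonal changes, so it must be matched carefully against the contribution already absorbed in $a_{v-4}$ and $a_{v-5}$ to avoid double counting. A clean way to control this is to compute the Euler characteristic contribution of each diagonal separately and compare with $P_{G_+(D)}(\la)$ via Theorem \ref{FarBie}; the evaluation $P_{G_+(D)}(-1)$, which distinguishes bipartite from non-bipartite graphs, is precisely what fixes the sign of the off-by-one correction in each case.
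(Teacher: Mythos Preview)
Your approach is correct and matches the paper's: the authors state that Theorem~\ref{rankKhnew} ``is an immediate consequence of Theorem~\ref{456HGR} and the isomorphism theorem for diagrams whose all-positive graphs have girth at least 5 or 6.'' Your concern about the bipartiteness correction is slightly overblown---once the girth hypothesis forces $t_1=t_2=t_3=0$, the non-bipartite branches of Theorem~\ref{456HGR} simplify directly to $\binom{p_1}{2}+a_{v-4}+1$ and $p_1+\binom{p_1+1}{3}-a_{v-5}-1$, so no separate Euler-characteristic bookkeeping is needed.
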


Theorem \ref{rankKhnew} is an immediate consequence of Theorem \ref{456HGR} and the isomorphism theorem for diagrams whose all-positive graphs have girth at least 5 or 6.

\begin{thm}\label{456HGR}
Let $G$ be a simple graph with cyclomatic number $p_1$ and subgraphs $T_i$ denoted as in Theorem \ref{FarBie}. Then we have the following groups in the chromatic homology of $G$:
\begin{eqnarray*}\label{RkTor45}
\rk H_{\A_2}^{4,v-4}(G)  &=& \rk \tor H_{\A_2}^{5,v-5}(G) =  \begin{cases}
\binom{p_1}{2}+ a_{v-4} & G \text{ bipartite} \\
\binom{p_1}{2}-t_1+1+ a_{v-4} & \text{ otherwise}
 \end{cases}\\
\rk H_{\A_2}^{5,v-5}(G) &=& \rk \tor H_{\A_2}^{6,v-6}(G) =  \begin{cases}
p_1 + \binom{p_1+1}{3}-t_2 - a_{v-5} & G \text{ bipartite} \\
p_1 + \binom{p_1+1}{3}-t_1(p_1-1)-t_2+2t_3-1 - a_{v-5} & \text{ otherwise}
 \end{cases}
\end{eqnarray*}
     The coefficients $a_{v-4}$ and $a_{v-5}$ are given by:
      
\begin{align*}
a_{v-4} &= \binom{v}{4}-E\binom{v-1}{3}+ \left(\binom{E}{2}-t_1\right)\binom{v-2}{2}+c_{v-3}(v-3) + c_{v-4}\\
a_{v-5} &= \binom{v}{5}-E\binom{v-1}{4}+ \left(\binom{E}{2}-t_1\right)\binom{v-2}{3}+c_{v-3}\binom{v-3}{2}+ c_{v-4}(v-4)+c_{v-5}
\end{align*}
 \end{thm}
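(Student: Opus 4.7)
The plan is to exploit Theorem \ref{ChromaticDetermined}: since $H_{\A_2}(G)$ is determined by $P_G(\la)$, the ranks in any bigrading can in principle be extracted from the coefficients $c_{v-k}$. The key identity is the graded Euler characteristic relation: because the graded dimension of $\A_2 = \Z[x]/(x^2)$ is $1+q$, we have
\begin{equation*}
P_G(1+q) \;=\; \sum_{i,j}(-1)^i q^j \rk H_{\A_2}^{i,j}(G).
\end{equation*}
Expanding the left side as $\sum_k c_{v-k}(1+q)^{v-k}$ and reading off the coefficient of $q^{v-n}$ gives $\sum_{k=0}^n c_{v-k}\binom{v-k}{n-k}$, which is exactly $a_{v-n}$ as defined in the statement.

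Next I would invoke the thinness of $H_{\A_2}(G)$: all nontrivial groups lie on the two diagonals $j = v-i$ and $j = v-i-1$, so at quantum grading $v-n$ only two homological degrees contribute, namely $(n, v-n)$ on the top diagonal and $(n-1, v-n)$ on the bottom diagonal. This collapses the Euler sum to
\begin{equation*}
\rk H_{\A_2}^{n, v-n}(G) \;=\; \rk H_{\A_2}^{n-1, v-n}(G) + (-1)^n a_{v-n}.
\end{equation*}
The torsion identifications $\rk H_{\A_2}^{n,v-n}(G) = \rk\tor H_{\A_2}^{n+1,v-n-1}(G)$ follow from the domino-block decomposition of $H_{\A_2}(G)$ in \cite{LS}, in which each free generator at $(i, v-i)$ on the top diagonal is paired with a $\Z_2$ summand at $(i+1, v-i-1)$ on the bottom diagonal. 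For $n = 4$, the required bottom-diagonal value $\rk H_{\A_2}^{3, v-4}(G)$ is matched, via the same domino pairing, with $\rk H_{\A_2}^{2, v-2}(G)$, which Proposition \ref{PS} evaluates to $\binom{p_1}{2}$ or $\binom{p_1}{2} - t_1 + 1$ depending on bipartiteness; substituting into the recursion gives the first claim. For $n = 5$ in the bipartite case, the analogous identification with $\rk H_{\A_2}^{3, v-3}(G)$ from Theorem \ref{rankKhold} yields $p_1 + \binom{p_1+1}{3} - t_2$.

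The main obstacle is the non-bipartite case at $n = 5$: here the free rank on the bottom diagonal at level $4$ is not simply equal to the top-diagonal rank at level $3$, but receives an additional combinatorial correction of $-t_1(p_1-1) + 2t_3$ coming from supplementary blocks in the \cite{LS} decomposition that appear when the distinguished bipartite summand is absent. Extracting this correction is the technical heart of the proof and requires either a direct analysis of low homological gradings of the chromatic chain complex via the state sum over spanning subgraphs, or an application of the Bockstein spectral sequence to track how $\Z_2$ torsion on one diagonal contributes to the free rank on the other, or an auxiliary Euler characteristic computation at the adjacent quantum grading $v-n+1$ that allows one to solve a small linear system for the outstanding rank. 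Once this correction is in hand, substitution into the recursion yields the claimed formula.
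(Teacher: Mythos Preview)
Your overall strategy is exactly the paper's: substitute $\lambda = 1+q$ in $P_G$, read off $a_{v-n}$ as the coefficient of $q^{v-n}$, and combine thinness with the knight-move isomorphism of \cite{CCR} to obtain the recursion
\[
\rk H_{\A_2}^{n,v-n}(G) \;=\; \rk H_{\A_2}^{n-2,\,v-(n-2)}(G) + (-1)^n a_{v-n},
\]
feeding in the known ranks at low $n$. The paper's proof is a single paragraph that defers the mechanics to the proof of \cite[Thm.~5.3]{SaSco} and simply notes that $t_1 = t_3 = 0$ when $G$ is bipartite.

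The genuine error is in your final paragraph. Your assertion that ``the free rank on the bottom diagonal at level $4$ is not simply equal to the top-diagonal rank at level $3$'' is false: the knight-move identity $\rk H_{\A_2}^{4,v-5}(G) = \rk H_{\A_2}^{3,v-3}(G)$ from \cite{CCR} holds for every simple graph, bipartite or not, and there are no ``supplementary blocks'' in the \cite{LS} decomposition to account for. The extra terms $-t_1(p_1-1)+2t_3$ live in $\rk H_{\A_2}^{3,v-3}(G)$ itself. You were misled because Theorem~\ref{rankKhold}, which you invoked for this rank, is stated only under the hypothesis $\ell(G_+(D))\ge 4$, where $t_1=t_3=0$ automatically; it does not give the formula for a general simple graph.

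To close the gap you do not need Bockstein or a chain-level analysis. Just run your own recursion once more at $n=3$:
\[
\rk H_{\A_2}^{3,v-3}(G) \;=\; \rk H_{\A_2}^{1,v-1}(G) - a_{v-3},
\]
with $\rk H_{\A_2}^{1,v-1}(G) = p_1 - 1$ in the non-bipartite case. Using the identity $\sum_{k=0}^{3}(-1)^k\binom{E}{k}\binom{v-k}{3-k} = -\binom{p_1+1}{3}$ together with Theorem~\ref{FarBie}'s expression $c_{v-3} = -\binom{E}{3}+(E-2)t_1+t_2-2t_3$, one obtains
\[
a_{v-3} \;=\; -\tbinom{p_1+1}{3} + t_1(p_1-1) + t_2 - 2t_3,
\]
so that $\rk H_{\A_2}^{3,v-3}(G) = p_1 + \binom{p_1+1}{3} - t_1(p_1-1) - t_2 + 2t_3 - 1$, and substitution into the $n=5$ recursion gives the claimed formula immediately. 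This is precisely what the reference to \cite[Thm.~5.3]{SaSco} in the paper's proof is covering.
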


\begin{proof}

Let the chromatic polynomial of $G$ have the form given in Equation \ref{ChromCoef}. We change variables to $\la = q+1$ to match the graded Euler characteristic of $H_{\A_2}(G)$. The coefficient of $q^{v-i}$ in this polynomial will be denoted $a_i$.
\begin{align*}
P_G(q) &= (q+1)^v + c_{v-1}(q+1)^{v-1} + \ldots + c_2 (q+1)^2 + c_1 (q+1)\\
&= q^v + a_{v-1}q^{v-1} + \ldots + a_2 q^2 + a_1q + a_0.
\end{align*}
We proceed as in the proof of \cite[Thm. 5.3]{SaSco}, using the formulas for the $c_i$s in Theorem \ref{FarBie} and the equivalence of $P_G(q)$ with chromatic homology. Note that $t_1 = t_3 = 0$ if $G$ is bipartite.
\end{proof}

  \begin{table}[H] 
	\centering
	\renewcommand{\arraystretch}{1}
	\begin{tabular}{|c| P{0.6cm} | P{0.6cm} | P{0.6cm} | P{0.7cm} | P{0.6cm} | P{0.6cm} | P{1.7cm} }
		\cline{1-7}
		$\mathbf{j/i}$ & 0 & 1 & $\ldots$ & $\ell-1$ & $\ell$ & $\cdots$  \\ \cline{1-7}
		 $v$ & $\blacksquare$ &  &  &  &  & & $\rightarrow a_v$\\ \cline{1-7}
		$v-1$ & $\blacksquare$  & $\blacksquare$ &  &  &  & & $\rightarrow a_{v-1}$\\ \cline{1-7}
		 $\vdots$ &  & $\ddots$ & $\ddots$ &  &  & & $\vdots$  \\ \cline{1-7}
		 $v-\ell+1$ &  &  & $\blacksquare$ & $\blacksquare$ &   & & $\rightarrow a_{v-\ell+1}$ \\ \cline{1-7}
		 $v-\ell$ &  &  &  & $\blacksquare$ & $\square$ &  \\ \cline{1-7}
		 $\vdots$ &  &  &  &  & $\square$  & $\ddots$ \\ \cline{1-7}
	\end{tabular}
	\caption{Chromatic homology $H_{\A_2}(G_+(D))$ with coefficients $a_{v-i}$ for the chromatic polynomial on the right. $\square$ indicates possible homology. $\blacksquare$ indicates isomorphism with Khovanov homology.} \label{tablechromproof}
	
\end{table}

The following theorem completely describes the part of Khovanov homology which is obtained from the isomorphism in Theorem \ref{Correspondence}.

 \begin{thm}\label{rankKhovanovgirth}
	Let $D$ be a diagram of a link $L$ with $c_+$ positive crossings, $c_-$ negative crossings, and $|s_+|$ circles in the all-positive Kauffman state, and let $N = -|s_+|+c_{+}-2c_{-}$. Suppose that $G_+(D)$ satisfies the conditions of Theorem \ref{rankHGRgirth} (in particular, the  girth $\ell$ of $G_+(D)$ is greater than $2$). For $0 < i < \ell$, we have the following ranks of the Khovanov homology of $L$:
	\begin{equation} \label{KhFormula}
	\rk Kh^{i-c_-,N+2i}(L) = \left(\displaystyle\sum_{\substack{r \ge 0,\\ 0\le k = i-2r \le i}} \binom{p_1-2+k}{k} \right)-n_{i+1} +(-1)^{i+1} \delta^b
	\end{equation}
	
where $\delta^b = 1$ if $G_+(D)$ is bipartite and 0 otherwise.
\end{thm}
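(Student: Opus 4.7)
The plan is to translate the claim to a rank computation in chromatic graph homology via Theorem \ref{Correspondence}, and then extract the rank using the chromatic polynomial together with the thinness of $H_{\A_2}$. The first step is a grading match. With $v = |s_+|$ and $N = -v + c_+ - 2c_-$, applying Theorem \ref{Correspondence} at $p = i - c_-$ and $q = N + 2i$ forces $j = v - i$; since $0 < i < \ell$ is in the range of the isomorphism,
\begin{equation*}
\rk Kh^{i-c_-,\, N+2i}(L) = \rk H^{i,\,v-i}_{\A_2}(G_+(D)).
\end{equation*}
So the claim reduces to computing $\rk H^{i,v-i}_{\A_2}(G_+(D))$, which is the content of the companion chromatic-homology result (Theorem \ref{rankHGRgirth}).

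The second step is to extract that chromatic-homology rank from the chromatic polynomial using Theorem \ref{ChromaticDetermined}. Changing variables $\la = 1+q$ realizes $P_G(1+q) = \sum_j a_j q^j$ as the graded Euler characteristic of $H_{\A_2}(G)$, and Theorem \ref{Meredith} yields $c_{v-j}$ for $j \le \ell-1$ in closed form. The generating-function identity
\begin{equation*}
\sum_j (-1)^j \binom{E}{j}(1+q)^{v-j} = q^E(1+q)^{v-E},
\end{equation*}
together with $v - E = 1 - p_1$ for a connected graph, gives $a_{v-i} = \binom{1-p_1}{i} = (-1)^i \binom{p_1 + i - 2}{i}$ for $0 \le i \le \ell-2$, with a boundary correction $(-1)^\ell n_\ell$ at $i = \ell-1$ coming from Meredith's $-n_\ell$ term. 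Thinness of $H_{\A_2}(G)$ (nontrivial only on the two diagonals $i+j \in \{v, v-1\}$) then turns the Euler characteristic at grading $v-i$ into the single relation
\begin{equation*}
\rk H^{i,\,v-i}_{\A_2}(G) - \rk H^{i-1,\,v-i}_{\A_2}(G) = (-1)^i a_{v-i}.
\end{equation*}
I would induct on $i$ from the base cases $\rk H^{0,v}(G) = 1$ and $\rk H^{0,v-1}(G) = \delta^b$ supplied by Proposition \ref{PS}, at each step reading off the lower-diagonal rank either from the torsion--free correspondence $\rk H^{i,v-i} = \rk \tor H^{i+1,v-i-1}$ visible in Theorems \ref{rankKhold} and \ref{rankKhnew}, or from a parallel Euler computation on the lower diagonal. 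Telescoping assembles the sum $\sum_r \binom{p_1-2+k}{k}$ together with the sign $(-1)^{i+1}\delta^b$ propagating from the base case, and $-n_{i+1}$ appears only at $i = \ell-1$, precisely where the Meredith correction enters.

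The main obstacle is closing this induction: the Euler characteristic alone yields only one equation per level, linking adjacent upper- and lower-diagonal ranks, so a second independent input is needed at each step. The torsion--free correspondence (or an analogous Euler computation on the lower diagonal, carried out by the same Meredith + thinness pipeline) supplies that missing equation; careful bookkeeping of the boundary $i = \ell-1$ then yields the $-n_{i+1}$ summand, and combining with the grading match completes the proof.
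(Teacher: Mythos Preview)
Your approach is essentially the paper's: reduce to chromatic homology via Theorem~\ref{Correspondence}, compute the Euler coefficients $a_{v-i}$ from Meredith's theorem, and then run an induction on $i$ using thinness. Your generating-function computation of $a_{v-i}$ is a clean variant of what the paper does term by term.

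The one genuine gap is the ``second independent input'' you flag at the end. The torsion--free correspondence $\rk H^{i,v-i}_{\A_2} = \rk\tor H^{i+1,v-i-1}_{\A_2}$ that you extract from Theorems~\ref{rankKhold} and~\ref{rankKhnew} relates the upper-diagonal free rank to an upper-diagonal torsion rank; it does not by itself give you the lower-diagonal rank $\rk H^{i-1,v-i}_{\A_2}$ that appears in your Euler relation. Likewise, a ``parallel Euler computation on the lower diagonal'' is not an independent equation: the quantum grading $j=v-i$ is a single row, and its Euler characteristic already is your equation $\rk H^{i,v-i} - \rk H^{i-1,v-i} = (-1)^i a_{v-i}$. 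What you actually need is the knight-move isomorphism for $H_{\A_2}$ from \cite{CCR},
\[
\rk H^{i-1,\,v-i}_{\A_2}(G) \;=\; \rk H^{i-2,\,v-(i-2)}_{\A_2}(G),
\]
which expresses the lower-diagonal rank at step $i$ as the upper-diagonal rank at step $i-2$, already known by induction. Once you substitute this (exactly as the paper does in the proof of Theorem~\ref{rankHGRgirth}), your telescoping argument goes through verbatim and produces the sum of binomials, the alternating $\delta^b$, and the $-n_{i+1}$ correction at $i=\ell-1$.
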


Based on Theorem \ref{Correspondence} and \cite{LS},  formula \eqref{KhFormula} also gives the number of $\Z_2$-torsion groups on the next grading of this diagonal: $\rk \tor Kh^{(i+1)-c_-,-|s_+|+c_+-2c_-+2(i+1)}(L)$. If we consider the all-negative state graph $G_-(D)$, an analogous statement holds for the highest homological gradings in $Kh(L)$.

\begin{cor} \label{RankGenFun}
    Let $D$ be a reduced diagram of $L$ that satisfies the conditions of Theorem \ref{rankKhovanovgirth}.
    If in addition, $G_+(D)$ is a non-bipartite graph, then the sequence of ranks 
    \begin{equation}
    \{S_0, \ldots, S_{\ell-2}\} = \{\rk H_{\A_2}^{i,v-i}(G_+(D))\}_{0\le i \le \ell-2} = \{\rk Kh^{i-c_-,N+2i}(L)\}_{0 \le i \le \ell-2}    
    \end{equation} is given by the first $\ell-1$ coefficients of the generating function $\dfrac{1}{(1+x)(1-x)^{p_1}}$.
\end{cor}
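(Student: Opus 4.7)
The plan is to combine Theorem~\ref{rankKhovanovgirth} with a standard generating-function identity for binomial coefficients. Under the hypotheses of the corollary, $G_+(D)$ is non-bipartite, so the term $(-1)^{i+1}\delta^b$ in equation \eqref{KhFormula} vanishes. Moreover, because $G_+(D)$ has girth $\ell$, for every $i$ with $0 \le i \le \ell - 2$ we have $i+1 < \ell$, and consequently $n_{i+1} = 0$. The $i=0$ case is formally outside the range $0 < i < \ell$ of Theorem~\ref{rankKhovanovgirth} but is handled by Proposition~\ref{PS}, which gives $S_0 = 1$; this agrees with the sum expression below (which reduces to the single term $k=r=0$ contributing $\binom{p_1-2}{0}=1$). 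Hence for all $0 \le i \le \ell - 2$ I would reduce the formula to
\[
S_i = \sum_{\substack{r \ge 0 \\ 0 \le k = i - 2r \le i}} \binom{p_1 - 2 + k}{k}.
\]

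Next I would form the formal power series $\sum_{i \ge 0} S_i x^i$ and reparameterize by independent indices $(k, r)$ with $k, r \ge 0$ and $i = k + 2r$. Because each pair $(k,r)$ contributes to a unique $i$, the double sum factors:
\[
\sum_{i \ge 0} S_i x^i = \Bigl(\sum_{k \ge 0} \binom{p_1 - 2 + k}{k} x^k\Bigr)\Bigl(\sum_{r \ge 0} x^{2r}\Bigr).
\]
The first factor is the standard negative-binomial series $(1-x)^{-(p_1-1)}$, and the second factor is $(1-x^2)^{-1} = [(1-x)(1+x)]^{-1}$. Multiplying them yields
\[
\sum_{i \ge 0} S_i x^i = \frac{1}{(1+x)(1-x)^{p_1}},
\]
and extracting the coefficients of $x^0, x^1, \ldots, x^{\ell - 2}$ recovers exactly $\{S_0, \ldots, S_{\ell-2}\}$, as claimed.

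The argument is essentially routine once the vanishing of $n_{i+1}$ and $\delta^b$ is established and the boundary case $i=0$ is reconciled with Proposition~\ref{PS}. The double-sum interchange is unconditional in the formal power series ring, and the binomial identity is standard, so no step presents a genuine obstacle; the mild bookkeeping around the index range $0 \le i \le \ell - 2$ is the only item requiring care.
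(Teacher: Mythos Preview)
Your proof is correct. The paper states Corollary~\ref{RankGenFun} without proof, treating it as an immediate consequence of Theorem~\ref{rankKhovanovgirth}; your argument supplies exactly the routine details one would fill in---dropping $\delta^b$ and $n_{i+1}$, handling $i=0$ via Proposition~\ref{PS}, and recognizing the factorization $\sum_k \binom{p_1-2+k}{k}x^k \cdot \sum_r x^{2r} = (1-x)^{-(p_1-1)}(1-x^2)^{-1}$.
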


For graphs of girth $\ell$, Theorem \ref{Meredith} provides a succinct description of the first $\ell$ coefficients of the chromatic polynomial. We first translate this statement into a description of the ranks of chromatic homology in the first $\ell$ homological gradings. As a corollary, we obtain the entire part of Khovanov homology that is determined by the all-positive or all-negative state graph as in Theorem \ref{rankKhovanovgirth}.

\begin{thm}\label{rankHGRgirth}
	Let $G$ be a simple graph with girth $\ell>2$, cyclomatic number $p_1$, and   $n_{i}$ denoting the number of $i$-cycles in $G$. Then, for $0 < i < \ell$, we have the following ranks of the chromatic homology of $G$:
	\begin{equation} \label{eqnrkHGR}
		\rk H_{\A_2}^{i,v-i}(G) = \left(\displaystyle\sum_{\substack{r \ge 0,\\ 0\le k = i-2r \le i}} \binom{p_1-2+k}{k} \right)-n_{i+1} +(-1)^{i+1} \delta^{b}
	\end{equation}
	where $\delta^{b} = 1$ if $G$ is bipartite and 0 otherwise.
\end{thm}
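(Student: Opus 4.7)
The approach rests on three pillars: Theorem \ref{ChromaticDetermined} reduces computing $H_{\A_2}(G)$ to knowing $P_G(\la)$; Meredith's Theorem \ref{Meredith} provides the first $\ell$ coefficients of $P_G$ in closed form; and the thinness of $H_{\A_2}(G)$, i.e.\ support on the two diagonals $j = v-i$ and $j = v - i - 1$, together with the graded Euler characteristic pins down individual ranks modulo one auxiliary input. I first shift variables $\la = 1 + q$ so that $P_G(1+q) = \sum_k a_{v-k}\,q^{v-k}$ matches the graded Euler characteristic of $H_{\A_2}(G)$. Expanding $a_{v-k} = \sum_{j=0}^{k} c_{v-j}\binom{v-j}{k-j}$, substituting Meredith's values $c_{v-j} = (-1)^j\binom{E}{j}$ for $j < \ell-1$ with the $-n_\ell$ correction at $j = \ell-1$, and applying the Vandermonde-type identity
\[
\sum_{j=0}^{k}(-1)^j \binom{E}{j}\binom{v-j}{k-j} = (-1)^k \binom{p_1 + k - 2}{k}
\]
(with $p_1 = E - v + 1$) collapses $a_{v-k}$ to
\[
a_{v-k} = (-1)^k\!\left[\binom{p_1 + k - 2}{k} - n_\ell\cdot[k = \ell - 1]\right], \qquad 0 \le k \le \ell - 1,
\]
where $[\,\cdot\,]$ denotes the Iverson bracket.

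Write $r_i := \rk H_{\A_2}^{i,v-i}(G)$ and $s_i := \rk H_{\A_2}^{i,v-i-1}(G)$. Thinness reduces the graded Euler characteristic to the single relation $r_k - s_{k-1} = (-1)^k a_{v-k}$ for $k \ge 1$, with base cases $r_0 = 1$ and $s_0 = \delta^b$ by Proposition \ref{PS}. Since this is one equation in two unknowns per step, I would run the induction in parallel with an independent formula for $s_i$, namely
\[
s_i = \sum_{\substack{0 \le k \le i-1 \\ k \equiv i-1\,(\mathrm{mod}\,2)}} \binom{p_1 + k - 2}{k} + (-1)^i \delta^b, \qquad 0 \le i \le \ell - 2,
\]
established via the structural results of \cite{LS}: the $\Z_2$-torsion matching $\rk \tor H_{\A_2}^{k+1,v-k-1}(G) = r_k$ (a consequence of thinness with only $\Z_2$-torsion) combined with mod-$2$ and rational rank comparison determines $s_k$ from the previously computed $r_j$'s. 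Substituting the formula for $s_{i-1}$ into the Euler-characteristic relation, and using $f(i) = f(i-2) + \binom{p_1+i-2}{i}$ where $f(i)$ denotes the sum in the theorem statement, telescopes cleanly to the claimed expression for $r_i$: the $-n_{i+1}$ term appears only at $i = \ell - 1$ via Meredith's correction, while the $(-1)^{i+1}\delta^b$ term propagates from $s_0 = \delta^b$ through alternating signs.

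The main obstacle is the parallel induction for $s_i$: the Euler characteristic alone produces one equation per $k$ and cannot separate $r_k$ from $s_{k-1}$, so extracting $s_i$ requires finer input about the chain-level or torsion structure of $H_{\A_2}(G)$. Once this is secured, the rest is routine Vandermonde and telescoping bookkeeping.
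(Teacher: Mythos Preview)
Your overall architecture matches the paper's: shift to $P_G(1+q)$, feed in Meredith's coefficients, use thinness to reduce the graded Euler characteristic to a two-term relation, and induct. The Vandermonde collapse to $a_{v-k} = (-1)^k\bigl[\binom{p_1+k-2}{k} - n_{\ell}[k=\ell-1]\bigr]$ is exactly what the paper does.

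The difference is at the step you yourself flag as the ``main obstacle.'' The paper does \emph{not} run a parallel induction on $s_i$ via torsion and mod-$2$ comparison. Instead it invokes the knight move isomorphism of \cite{CCR}, which gives directly
\[
\rk H_{\A_2}^{i-1,\,v-i}(G) \;=\; \rk H_{\A_2}^{i-2,\,v-(i-2)}(G),
\]
i.e.\ $s_{i-1} = r_{i-2}$ in your notation. Substituting this into your Euler-characteristic relation $r_i - s_{i-1} = (-1)^i a_{v-i}$ yields the clean two-step recursion
\[
r_i \;=\; r_{i-2} + \binom{p_1+i-2}{i} - n_{i+1},
\]
from which the claimed formula follows by telescoping, with the base cases $i=1,2,3$ supplied by \cite{PS,SaSco}. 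No separate control of $s_i$ is needed.

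Your proposed route through $\rk\tor H_{\A_2}^{k+1,v-k-1}(G) = r_k$ and mod-$2$/rational comparison is not wrong in spirit, but as written it is not a proof: thinness plus ``only $\Z_2$-torsion'' alone does not force that torsion-rank identity, and the structural input you would need from \cite{LS} to justify it is precisely (a form of) the knight move. So you would be re-deriving the lemma rather than bypassing it. Replace the parallel $s_i$-induction with the one-line citation to \cite{CCR} and the argument is complete.
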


\begin{proof}
	For $i=1,2,3$, this statement follows from \cite{PS}, \cite{SaSco}. We show by induction that it holds for $i>3$.
	
	 As above, let $a_{v-i}$ denote the coefficient of $P_G(q)$ derived from the quantum grading $j=v-i$ (see Table \ref{tablechromproof}). For $0 < i < \ell-1$, we use Theorem \ref{Meredith} to compute:
	\begin{align*}
	a_{v-i} &= \binom{v}{v-i} + c_{v-1}\binom{v-1}{v-i} + \ldots + c_{v-i}\binom{v-i}{v-i} = \displaystyle\sum_{k=0}^i c_{v-k} \binom{v-k}{v-i}\\
	&= \displaystyle\sum_{k=0}^i (-1)^{k} \displaystyle\binom{E}{k} \binom{v-k}{v-i} = (-1)^i \binom{p_1-2+i}{i}
	\end{align*}
	and for $i= \ell-1$, a similar computation shows that $a_{v-(\ell-1)} = (-1)^{\ell-1} \left( \binom{p_1-2+(\ell-1)}{\ell-1} - n_{\ell} \right).$ For $i < \ell-1$, we have $n_{i+1} = 0$, and thus we can say $$a_{v-i} = (-1)^i\left( \binom{p_1-2+i}{i} - n_{i+1} \right)$$ for $0<i \le \ell-1$.
	
	Suppose that $3 < i < \ell$ and that Equation \ref{eqnrkHGR} holds for all homological gradings less than $i$. We show that Equation \ref{eqnrkHGR} also holds for $\rk H_{\A_2}^{i,v-i}(G)$. Since chromatic homology is thin, each coefficient is the difference of the ranks of the two homology groups in the grading $j=v-i$.
	
	\begin{equation} \label{RankDiff}
		a_{v-i} = (-1)^{i-1}\rk H_{\A_2}^{i-1,v-i}(G) + (-1)^{i}\rk H_{\A_2}^{i,v-i}(G)
	\end{equation}

	 By the knight move isomorphism of \cite{CCR}, $\rk H_{\A_2}^{i-1,v-i}(G) = \rk H_{\A_2}^{i-2,v-(i-2)}(G)$. We make this substitution into Equation \ref{RankDiff}, along with the value of $a_{v-i}$ derived above.
	 
	\begin{align*}
	     (-1)^i\left( \binom{p_1-2+i}{i} - n_{i+1} \right) &= (-1)^{i-1}\rk H_{\A_2}^{i-2,v-(i-2)}(G) + (-1)^{i}\rk H_{\A_2}^{i,v-i}(G)\\
	    \binom{p_1-2+i}{i} - n_{i+1}  &= -\rk H_{\A_2}^{i-2,v-(i-2)}(G) + \rk H_{\A_2}^{i,v-i}(G)\\
	     \rk H_{\A_2}^{i,v-i}(G) &= \rk H_{\A_2}^{i-2,v-(i-2)}(G) + \binom{p_1-2+i}{i}- n_{i+1}
	\end{align*}
		By the induction assumption
	\begin{align*}
		\rk H_{\A_2}^{i-2,v-(i-2)}(G) = \left(\displaystyle\sum_{\substack{r  \ge 0,\\ 0 \le k = (i-2)-2r \le i-2}} \binom{p_1-2+k}{k} \right)-n_{i-1} +(-1)^{i-1} \delta^b
	\end{align*}
	We may drop the term $n_{i-1} = 0$ since we are assuming $i < \ell$. Finally, we collect all binomial coefficients into the summation and note that $i-1$ has the same parity as $i+1$.
	\begin{align*}
	 \rk H_{\A_2}^{i,v-i}(G) &= \left(\displaystyle\sum_{\substack{r  \ge 0,\\ 0 \le k = (i-2)-2r \le i-2}} \binom{p_1-2+k}{k} \right) +(-1)^{i-1} \delta^b + \binom{p_1-2+i}{i} - n_{i+1}\\
	&=  \left(\displaystyle\sum_{\substack{r  \ge 0,\\ 0 \le k = i-r \le i}} \binom{p_1-2+k}{k} \right)- n_{i+1} + (-1)^{i+1} \delta^b \qedhere
	\end{align*}
\end{proof}

\begin{exa}\label{KhFormulaEx}
Let $K$ be the knot $11a362$ (Dowker-Thistlethwaite notation) with diagram $D$ depicted in Figure \ref{diagram11}. The all-positive state graph $G_+(D)$ has girth $\ell = 6$ and cyclomatic number $p_1 = 2$. The Khovanov homology $Kh(K)$ is shown in Table \ref{exampletable1} and the chromatic homology $H_{\A_2}(G_+(D))$ in Table \ref{exampletable2}. 

 \begin{figure}[h]
	\centering
	\includegraphics[width=0.7\textwidth]{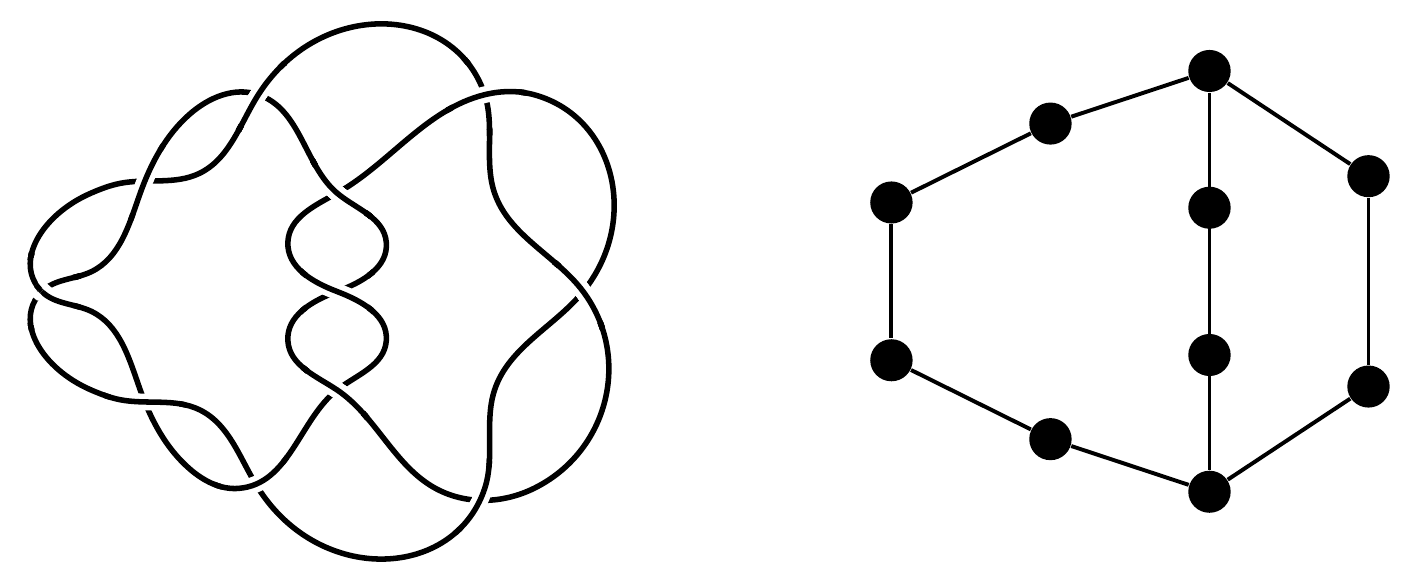}
	\caption{Diagram of $11a362$ with all-positive state graph $G_+(D)$}
	    \label{diagram11}
\end{figure}

The graph $G_+(D)$ is bipartite with $c_- = 11$ and $N = -32$. The groups shown in bold in Table \ref{exampletable1} are those which correspond to chromatic homology groups in $H_{\A_2}(G_+(D))$. Theorem \ref{rankKhovanovgirth} describes the ranks of these Khovanov homology groups which are located on the lower diagonal. For $i=1$ through $i=\ell-2=4$:
	\begin{align*}
	    	\rk Kh^{i-c_-,N+2i}(L) &= \left(\displaystyle\sum_{\substack{r \ge 0,\\ 0\le k = i-2r \le i}} \binom{2-2+k}{k} \right) +(-1)^{i+1} \delta^b \\ &= \left(\displaystyle\sum_{\substack{r \ge 0,\\ 0\le k = i-2r \le i}} 1 \right) +(-1)^{i+1} = \left \lfloor \dfrac{i}{2}+1 \right \rfloor +(-1)^{i+1}
	\end{align*}
while for $i=\ell-1 = 5$:
	\begin{align*}
	    	\rk Kh^{5-c_-,N+10}(L) = \left(\displaystyle\sum_{\substack{r \ge 0,\\ 0\le k = i-2r \le i}} 1 \right) -n_6 +(-1)^{5+1} = \left \lfloor \dfrac{5}{2}+1 \right \rfloor -1 +(-1)^{6} = 3
	\end{align*}
	
Observe that if one ignores the $\delta^b$ term that keeps track of the bipartite property, the first $\ell-1$ ranks given by the formula are $1, 1, 2, 2, 3$ which are the first 5 coefficients of the generating function $\dfrac{1}{(1+x)(1-x)^{2}}$ (see Corollary \ref{RankGenFun}).

	\begin{center}
\begin{table}
	\renewcommand{\arraystretch}{1}
	\begin{center}
	\begin{tabular}{|c| P{0.6cm} | P{0.6cm} | P{0.8cm} | P{0.8cm} | P{0.8cm} | P{0.8cm} | P{0.9cm} | P{0.8cm} | P{0.8cm} | P{0.8cm} | P{0.5cm} | P{0.5cm} | P{0.5cm} }
		\cline{1-13}
		$\mathbf{q/p}$ & $-11$ & $-10$ & $-9$ & $-8$ & $-7$ & $-6$ & $-5$ & $-4$ & $-3$ & $-2$ & $-1$ & $0$ &  \\ \cline{1-13}
		 $-8$ & &  &  &  &  & & & & & & & $1$ &  \\ \cline{1-13}
		  $-10$ & &  &  &  &  & & & & & & & $1_2$ &  \\ \cline{1-13}
		   $-12$ & &  &  &  &  & & & & &  $3$ & $1$ & &  \\ \cline{1-13}
		 $-14$ & &  &  &  &  & & & & $1$ & $3_2$ & & &  \\ \cline{1-13}
		$-16$  &   &  &  & &  &  &  & $3$ & $3$, $1_2$ && & &  \\ \cline{1-13}
		$-18$ &  &  &   &  &  &  & $3$ & $1$, $3_2$ & && & &  \\ \cline{1-13}
		$-20$ &  &  &    & &   & $\fakebold 2$ & $3$, $\fakebold 3_{\textbf{2}} $ & & & & & &  \\ \cline{1-13}
		$-22$ &   &  &   &  & $\fakebold 3$ & $\fakebold 3, \fakebold 2_{\textbf{2}} $ & & && & & &  \\ \cline{1-13}
		$-24$ &  &  &  & $\fakebold 1$ & $\fakebold 2, \fakebold 3_{\textbf{2}} $  &  & & && & & &  \\ \cline{1-13}
		$-26$  &  &  & $\fakebold 2$ & $\fakebold 3, \fakebold 1_{\textbf{2}}$ &  &  & & & && & &  \\ \cline{1-13}
		$-28$ &  &  & $\fakebold 1, \fakebold 2_{\textbf{2}}$ & &   &  & & & & & & &  \\ \cline{1-13}
		$-30$ &  $\fakebold 1$ & $\fakebold 2$ &   &  &  &  & & && & & &  \\ \cline{1-13}
		 $-32$ & $\fakebold 1$ &  &  &  &   &  & & && & & &  \\ \cline{1-13}
	\end{tabular}
	\caption{Khovanov homology $Kh(11a362)$. An entry of $k$ represents a summand $\Z^k$, and $k_{2}$ represents a summand of $\Z_2^k$. Entries in bold, from -11 to -5, are isomorphic to gradings in chromatic homology of $G_+(D)$ via Theorem \ref{Correspondence}.} \label{exampletable1}
	\end{center}
\end{table}\end{center}

	\begin{center}
\begin{table}
	\renewcommand{\arraystretch}{1}
	\begin{center}
	\begin{tabular}{|c| P{0.6cm} | P{0.6cm} | P{0.8cm} | P{0.8cm} | P{0.8cm} | P{0.8cm} | P{0.9cm} | P{0.8cm} | P{0.8cm} | P{0.8cm} }
		\cline{1-10}
		$\mathbf{j/i}$ & $0$ & $1$ & $2$ & $3$ & $4$ & $5$ & $6$ & $7$ & $8$ &  \\ \cline{1-10}
		 $10$ & $\fakebold 1$ &  &  &  &  & & & & &    \\ \cline{1-10}
		  $9$ & $\fakebold 1$ & $\fakebold 2$ &  &  &  & & & &  &  \\ \cline{1-10}
		   $8$ & & & $\fakebold 1, \fakebold 2_{\textbf{2}}$  &  &  & & &  &  &   \\ \cline{1-10}
		 $7$ & &  & $\fakebold 2$ & $\fakebold 3, \fakebold 1_2$ &  & &  & &  &   \\ \cline{1-10}
		$6$  &   &  &  & $\fakebold 1$ & $\fakebold 2, \fakebold 3_{\textbf{2}}$ &    &  &   &&  \\ \cline{1-10}
		$5$ &  &  &   &  &   $\fakebold 3$ & $\fakebold 3, \fakebold 2_{\textbf{2}}$ &  & & &   \\ \cline{1-10}
		$4$ &  &  &    &    &  & $\fakebold 2$ & $2, \fakebold 3_{\textbf{2}}$ & & &  \\ \cline{1-10}
		$3$ &   &     &  &  &  & & $3$ & $1, 2_2$ & &  \\ \cline{1-10}
		$2$ &    &  &  &   &  & & & $2$ & $1_2$ &  \\ \cline{1-10}
		$1$  &    &  &  &  &  & & & & $1$ & \\ \cline{1-10}
	\end{tabular}
	\caption{Chromatic homology $H_{\A_2}(G_+(D))$ for the graph in Example \ref{KhFormulaEx}. An entry of $k$ represents a summand $\Z^k$, and $k_{2}$ represents a summand of $\Z_2^k$. Entries in bold are isomorphic to gradings in Khovanov homology (see Table \ref{exampletable1}).} \label{exampletable2}
	\end{center}
\end{table}\end{center}
\end{exa}

\section{More on the head and tail of the Jones polynomial}

Theorem \ref{rankKhovanovgirth} can be used to compute the $\ell$ extremal coefficients in the head or tail of the Jones polynomial, subject to certain conditions on the Khovanov homology of $L$. As shown in Table \ref{table2}, if $Kh^{i,j}(L)$ is trivial for $i > (\ell-1) - c_-$ and $j < N+2\ell$, then the extremal coefficients of the unnormalized Jones polynomial are determined by precisely by the ranks described in Theorem \ref{rankKhovanovgirth}. We conjecture that these gradings are always trivial in Khovanov homology, so that Theorem \ref{Jones1} is true for all links and not only those which are Khovanov thin.

\begin{table}
	\centering
	\renewcommand{\arraystretch}{1}
	\begin{tabular}{|c|c | c | c | c | c|c |c }
		\cline{1-7}
		$\mathbf{q/p}$ & $-c_-$ & $1-c_-$ & $\ldots$ & $(\ell-1)-c_-$ & $\ell-c_-$ & $\cdots$  \\ \cline{1-7}
		$\vdots$ & &  &  &  & $\square$ & $\iddots$& \\ \cline{1-7}
		$N+2\ell$ &   &  &  & $\blacksquare$ &$\square$  &  & \\ \cline{1-7}
		$N+2(\ell-1)$ &  &  & $\blacksquare$  & $\blacksquare$ & ? & ? & $\rightarrow \al_{\ell-1} = a_{v-\ell+1}$ \\ \cline{1-7}
		$\vdots$ &  & $\iddots$ & $\iddots$   & &  ? & ? & $\vdots$ \\ \cline{1-7}
		$N+2$ & $\blacksquare$  & $\blacksquare$ &   &  & ? & ? & $\rightarrow \al_1 = a_{v-1}$ \\ \cline{1-7}
		$N$ & $\blacksquare$ &  &  &  & ?  & ? & $\rightarrow \al_0 = a_v$ \\ \cline{1-7}
	\end{tabular}
	\caption{Khovanov homology $Kh(L)$ with coefficients $\al_i$ for the unnormalized Jones polynomial on the right (compare with chromatic coefficients $a_{v-i}$ in Table \ref{tablechromproof}). $\square$ indicates possible homology. $\blacksquare$ indicates isomorphism with chromatic homology. ? indicates gradings that must be zero for results to hold for all links} \label{table2}	
\end{table}

\begin{thm} \label{Jones1}
Let $D$ be a diagram of a link $L$ such that $Kh(L)$ is homologically thin and $G_+(D)$ has girth $\ell > 2$, with cyclomatic number $p_1$ and number of $\ell$-cycles $n_{\ell}$. Then the first $\ell$ coefficients in the tail of $J_L$ are given by the formula: 
\begin{equation}\label{JC}
\be_{i} = \begin{cases}
(-1)^{i-c_-(D)}\binom{p_1-1+i}{i} & 0 \le i \le \ell - 2\\
(-1)^{\ell-1-c_-(D)}\left(\binom{p_1-1+(\ell-1)}{\ell-1} - n_{\ell}\right) & i = \ell-1\\
\end{cases}\end{equation}

If we consider the all-negative state graph $G_-(D)$, an analogous statement holds for the first $\ell(G_-(D))$ coefficients in the head of $J_L$.
\end{thm}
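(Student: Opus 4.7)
The plan is to compute the lowest $\ell$ coefficients of the unnormalized Jones polynomial $\hat J_L(q)$ via the graded Euler characteristic of $Kh(L)$, and then convert to the normalized coefficients $\be_i$ using the relation $\hat J_L(q) = (q+q^{-1})J_L(q)$.

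The first step is to pin down the two diagonals on which the thin Khovanov homology of $L$ lives. The extremal group $Kh^{-c_-,N}(L) = \Z$ from Proposition \ref{PS} places these diagonals at $q - 2p = N+2c_-$ and $q - 2p = N+2c_-+2$. Consequently the coefficient $\al_i$ of $q^{N+2i}$ in $\hat J_L$ can only receive contributions from the two bigradings $(i-c_-,N+2i)$ and $((i-1)-c_-,N+2i)$. For every $i$ in the range $0 \le i \le \ell-1$ both of these homological indices are strictly less than $\ell - c_-$, so Theorem \ref{Correspondence} applies and identifies each Khovanov rank with a chromatic one: $\rk Kh^{i-c_-,N+2i}(L) = \rk H_{\A_2}^{i,v-i}(G_+(D))$, and likewise at the adjacent bigrading, where $v = |s_+(D)|$.

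Next, since the graded Euler characteristic of $H_{\A_2}(G_+(D))$ equals $P_{G_+(D)}(1+q)$, the alternating sum defining $\al_i$ collapses to $\al_i = (-1)^{c_-} a_{v-i}$ for $0 \le i \le \ell-1$, where $a_{v-i}$ is the coefficient of $q^{v-i}$ in $P_{G_+(D)}(1+q)$. The calculation carried out inside the proof of Theorem \ref{rankHGRgirth}, which combines Meredith's theorem with the substitution $\la = 1+q$, supplies the explicit values $a_v = 1$, $a_{v-i} = (-1)^i\binom{p_1-2+i}{i}$ for $1 \le i \le \ell-2$, and $a_{v-(\ell-1)} = (-1)^{\ell-1}\bigl(\binom{p_1+\ell-3}{\ell-1} - n_\ell\bigr)$.

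Finally, comparing coefficients on both sides of $\hat J_L(q) = (q+q^{-1})J_L(q)$ gives $\al_0 = \be_0$ and $\al_i = \be_{i-1} + \be_i$ for $i \ge 1$, so $\be_i = \sum_{k=0}^i (-1)^{i-k}\al_k$. For $0 \le i \le \ell-2$, substituting the explicit values of $\al_k$ and applying the hockey-stick identity $\sum_{k=0}^i \binom{p_1-2+k}{k} = \binom{p_1-1+i}{i}$ yields $\be_i = (-1)^{i-c_-}\binom{p_1-1+i}{i}$. For the boundary case $i = \ell-1$, writing $\be_{\ell-1} = \al_{\ell-1} - \be_{\ell-2}$ and invoking Pascal's identity to combine $\binom{p_1+\ell-3}{\ell-1}$ with $\binom{p_1+\ell-3}{\ell-2}$ produces the stated formula with the $n_\ell$ correction carried through unchanged. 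The head statement follows by replaying the argument for the mirror of $L$, which swaps $G_+(D)$ with $G_-(D)$ and exchanges head with tail. The main obstacle lies in the first step: confirming that thinness of $Kh(L)$ traps every possible contribution to $\al_i$ inside the range where Theorem \ref{Correspondence} holds, so that no unaccounted Khovanov group perturbs the Euler characteristic computation; once this is settled, what remains is routine bookkeeping of signs and binomial identities.
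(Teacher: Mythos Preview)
Your argument is correct, and it takes a genuinely different route from the paper. The paper works through \emph{reduced} Khovanov homology: it uses Theorem~\ref{rankKhovanovgirth} together with the knight-move pairing and Shumakovitch's relation between reduced and unreduced $\Z_2$-Khovanov homology to compute $\rk\Widetilde{Kh}^{i-c_-,N+2i+1}(L)$ explicitly, and then reads off $\be_i$ as a single Euler-characteristic term. You instead stay with the \emph{unreduced} theory, observe that thinness forces the two Khovanov diagonals to coincide with the two chromatic diagonals in the range $0\le i\le \ell-1$, so that $\al_i = (-1)^{c_-}a_{v-i}$ by pure Euler-characteristic bookkeeping, and then recover $\be_i$ algebraically via $\hat J_L=(q+q^{-1})J_L$ and the hockey-stick identity. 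Your approach is more elementary in that it never invokes reduced Khovanov homology or \cite{Shum1,Shum2}; the paper's approach buys a bit more, namely explicit ranks of $\Widetilde{Kh}$ in the tail, not just the Jones coefficients. One point you leave implicit and might make explicit: the second diagonal is at $q-2p=N+2c_-+2$ rather than $N+2c_--2$; this follows because $\ell>2$ makes $D$ plus-adequate, so $N$ is the minimal quantum grading of $Kh(L)$.
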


\begin{proof} The normalized Jones polynomial $J_L$ is the graded Euler characteristic of the reduced Khovanov homology $\Widetilde{Kh}(L)$, so we prove this result by describing $\Widetilde{Kh}(L)$. Let $R_i = \rk Kh^{i-c_-, N+2i}(L)$ given by the formula in Theorem \ref{rankKhovanovgirth}:
\begin{equation}
    R_i = \rk Kh^{i-c_-,N+2i}(L) = \left(\displaystyle\sum_{\substack{r \ge 0,\\ 0\le k = i-2r \le i}} \binom{p_1-2+k}{k} \right)-n_{i+1} +(-1)^{i+1} \delta^b
\end{equation}
for $0 < i < \ell$. We also assign $R_0 = 1$, the rank of $Kh^{-c_-, N}(L)$ for any $L$ satisfying the above condition on girth. For notational convenience we let $R_i = 0$ for $i<0$.

Since the extreme gradings of $Kh(L)$ from $i=c_-$ through $i=(\ell-1)-c_-$ are isomorphic to chromatic homology, this part of $Kh(L)$ is thin with only $\Z_2$ torsion. The knight move isomorphism gives us the ranks of the additional groups on the second diagonal:
\begin{equation}
    R_i = \rk Kh^{i-c_-,N+2i}(L) = \rk Kh^{(i+1)-c_-,N+2(i+2)}(L)
\end{equation}
which is valid for $1 \le i \le \ell-2$ if $G_+(D)$ is bipartite and $0 \le i \le \ell-2$ otherwise. Passing to Khovanov homology over $\Z_2$ coefficients, we can replace ``knight move" pairs of $\Z$s with ``tetrominos" of $\Z_2$s, see \cite{Shum2, LS}:
\begin{equation}
    \rk \tor Kh_{\Z_2}^{i-c_-,N+2i}(L) = \rk \tor Kh_{\Z_2}^{i-c_-,N+2(i+1)}(L) = \begin{cases}
    R_1 & i=1 \text{ and $G_+(D)$ bipartite} \\
    R_{i-1}+R_i & \text{ otherwise, } 0 \le i < \ell
    \end{cases}
\end{equation}

Using \cite[Cor.3.2C]{Shum1} and the fact that $\rk \Widetilde{Kh^{i,j}}(L) = \rk \Widetilde{Kh^{i,j}_{\Z_2}}(L)$ for thin links, we obtain the reduced integer Khovanov homology:
\begin{equation}
    \rk \Widetilde{Kh}^{i-c_-,N+2i+1}(L) = \begin{cases}
    R_1 & i=1 \text{ and $G_+(D)$ bipartite} \\
    R_{i-1}+R_i & \text{ otherwise, } 0 \le i < \ell
    \end{cases}
\end{equation}

Taking the graded Euler characteristic of $\Widetilde{Kh}(L)$ yields the following coefficients in the tail of $J_L(q)$:
\begin{equation}
\be_{i} = \begin{cases}
(-1)^{1-c_-}R_1 & i = 1 \text{ and $G_+(D)$ bipartite}\\
(-1)^{i-c_-}(R_{i-1}+R_i) & \text{ otherwise, } 0 \le i < \ell
\end{cases}\end{equation}

The formula in Equation \ref{JC} clearly holds for $i=0$, and for the bipartite case when $i=1$ by direct calculation.
\begin{align*}
     R_1 = \left(\displaystyle\sum_{\substack{r \ge 0,\\ 0\le k = 1-2r \le 1}} \binom{p_1-2+k}{k} +(-1)^2\right) = \binom{p_1-1}{1} + 1 = \binom{p_1-1+1}{1}
\end{align*}

For all other cases with $1 \le i < \ell-1$, we compute $R_{i-1} + R_i $:
\begin{align*}
& 
    \left(\displaystyle\sum_{\substack{r \ge 0,\\ 0\le k = (i-1)-2r \le i-1}} \binom{p_1-2+k}{k} +(-1)^{i}\delta_b\right)+ \left(\displaystyle\sum_{\substack{r \ge 0,\\ 0\le k = i-2r \le i}} \binom{p_1-2+k}{k} +(-1)^{i+1}\delta_b\right)\\
    &= \displaystyle\sum_{k=0}^i \binom{p_1-2+k}{k}= \binom{p_1-1+i}{i}=R_{i-1} + R_i
\end{align*}
where $\delta^{b} = 1$ if $G_+(D)$ is bipartite and 0 otherwise.
For $i=\ell-1$, $n_{i+1} = n_{\ell}$ is non-zero so that $R_{\ell-2} + R_{\ell-1} = \binom{p_1-1+(\ell-1)}{\ell-1} - n_{\ell}$.
\end{proof}
\begin{exa}\label{JonesFormulaEx}
Let $K$ be the knot $11a362$ as in Example \ref{KhFormulaEx}. 
The unnormalized Jones polynomial $\hat{J}_K(q)$ is the graded Euler characteristic of $Kh(K)$:
\begin{align*}
    \hat{J}_K(q) &= -q^{-32}+q^{-30}-q^{-28}+q^{-26}-q^{-24}-q^{-20}-2q^{-18}-q^{-14}+2q^{-12}+q^{-8}
\end{align*}
Table \ref{exampletablejones} illustrates how the first six terms in the tail (including one zero term) arise from the part of $Kh(K)$ that agrees with the chromatic homology of $G_+(D)$.
The normalized Jones polynomial is given by:
\begin{align*}
    &J_K(q) = \hat{J}_K(q)/(q+q^{-1})= \be_0 q^{-31} + \be_1 q^{-29} + \be_2 q^{-27} + \be_3 q^{-25} + \be_4 q^{-23} + \be_5 q^{-21} + \ldots\\
    &= -q^{-31} + 2q^{-29} - 3q^{-27} + 4q^{-25} - 5q^{-23} + 5q^{-21} - 6q^{-19} + 4q^{-17} - 4q^{-15} + 3q^{-13} - q^{-11} + q^{-9}
\end{align*}
Any A-adequate diagram of $K$ must have the same number of negative crossings $(c_- = 11)$ as diagram $D$ \cite{Lick}. The coefficients $\be_i$ agree with the formulas given in Theorem \ref{Jones1} for $0 \le i \le \ell-1 = 5$.

\begin{table}[H]
	\centering
	\renewcommand{\arraystretch}{1}
	\begin{tabular}{|c| P{0.8cm} | P{0.8cm} | P{0.8cm} | P{0.8cm} | P{0.8cm} | P{0.8cm} | P{0.9cm} | P{0.8cm} | l  }
		\cline{1-9}
		$\mathbf{q/p}$ & $-11$ & $-10$ & $-9$ & $-8$ & $-7$ & $-6$ & $-5$ & $-4$ &  \\  \cline{1-9}
		$-16$  &   &  &  & &  &  &  & $\iddots$ &   \\ \cline{1-9}
		$-18$ &  &  &   &  &  &  & $3$ & $\iddots$ &   \\ \cline{1-9}
		$-20$ &  &  &    & &   & $\fakebold 2$ & $3$, $\fakebold 3_2$ & &  \\ \cline{1-9}
		$-22$ &   &  &   &  & $\fakebold 3$ & $\fakebold 3, \fakebold 2_2$ & & & $\rightarrow 0$  \\ \cline{1-9}
		$-24$ &  &  &  & $\fakebold 1$ & $\fakebold 2, \fakebold 3_2$  &  & & & $\rightarrow -q^{-24}$ \\ \cline{1-9}
		$-26$  &  &  & $\fakebold 2$ & $\fakebold 3, \fakebold 1_2$ &  &  & & & $\rightarrow q^{-26}$ \\ \cline{1-9}
		$-28$ &  &  & $\fakebold 1, \fakebold 2_2$ & &   &  & & & $\rightarrow -q^{-28}$ \\ \cline{1-9}
		$-30$ &  $\fakebold 1$ & $\fakebold 2$ &   &  &  &  & & & $\rightarrow q^{-30}$ \\ \cline{1-9}
		 $-32$ & $\fakebold 1$ &  &  &  &   &  & & & $\rightarrow -q^{-32}$ \\ \cline{1-9}
	\end{tabular}
	\caption{Lowest gradings of the Khovanov homology $Kh(11a362)$. Entries in bold are isomorphic to gradings in chromatic homology of $G_+(D)$ via Theorem \ref{Correspondence}. The rightmost column contains the terms of the graded Euler characteristic which agree with the chromatic polynomial of $G_+(D)$.} \label{exampletablejones}
\end{table}

\end{exa}

\section{Girth of a link}\label{Girth1}

Each planar diagram $D$ of a link $L$ has an associated state graph $G_+(D)$ whose chromatic homology is related to Khovanov homology by the correspondence described in Theorem \ref{Correspondence}. 
Notice that the girth of $G_+(D)$ depends on the diagram of a knot. For example, adding a right-hand twist to any strand of $D$ using a Reidemeister I move creates a loop in $G_+(D)$, reducing the girth of this graph to $1$. 

The applicability of Theorem \ref{rankKhovanovgirth} depends on the girth of  $G_+(D)$; therefore, given any link $L$, we are interested in finding a diagram $D$ that maximizes the contribution of chromatic homology to Khovanov homology.

The largest such contribution made to $Kh(L)$ comes from the diagram for which $G_+(D)$ has the largest possible girth. This fact motivated the following definition that allows us to explicitly state the extent of the correspondence between Khovanov homology and the Jones polynomial with chromatic homology and the chromatic polynomial, respectively. 

\begin{defn}[\cite{SaSco}]
The girth of a link $L$ is $\gr(L) = \max\{\ell(G_+(D))~|~\text{$D$ is a diagram of L} \}$ where $G_+(D)$ is the graph obtained from the all-positive Kauffman state of diagram $D$, and $\ell(G_+(D))$ is the girth of graph $G_+(D)$.
\end{defn}

\begin{prop}[\cite{SaSco}] \label{GirthFinite}
	The girth $\gr(L)$ of any link $L$ is finite. 
\end{prop}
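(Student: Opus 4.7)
The plan is to bound $\gr(L)$ above by the homological width $w(L) := i_{\max}(L) - i_{\min}(L)$ of $Kh(L)$ plus a small constant, where $[i_{\min}(L), i_{\max}(L)]$ is the range of homological gradings on which $Kh(L)$ is supported. Since $Kh(L)$ has finite total rank, $w(L)$ is a finite invariant of $L$, so establishing such a bound proves the proposition. The key tools are the partial isomorphism of Theorem \ref{Correspondence} and the explicit rank formula of Theorem \ref{rankKhovanovgirth}.

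Let $D$ be any diagram of $L$ with $\ell := \ell(G_+(D)) \ge 3$; diagrams with $\ell \le 2$ contribute at most $2$ to the max defining $\gr(L)$ and can be dismissed. First I would pin down $c_-(D)$: the first part of Proposition \ref{PS} gives $Kh^{-c_-(D), N}(L) = \mathbb{Z}$, and since the Khovanov cochain complex of $D$ lives in homological gradings $[-c_-(D), c_+(D)]$, the non-vanishing at the bottom grading forces $-c_-(D) = i_{\min}(L)$. Thus $c_-(D) = -i_{\min}(L)$ is determined by $L$ alone, independent of the choice of $D$.

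Next I would extract from Theorem \ref{rankKhovanovgirth} an index $i_0$ close to $\ell$ at which $\rk Kh^{i_0 - c_-(D), N+2i_0}(L) > 0$. For $0 < i \le \ell - 2$ we have $n_{i+1} = 0$, so the rank formula collapses to
\[
\sum_{\substack{r \ge 0,\\ 0 \le k = i-2r \le i}} \binom{p_1-2+k}{k} + (-1)^{i+1}\delta^b.
\]
When $p_1 \ge 2$ each binomial summand is at least $1$, so $i_0 = \ell - 2$ always yields a positive rank. When $p_1 = 1$ (so $G_+(D)$ is a single cycle) the binomial sum equals $1$ for $i$ even and $0$ for $i$ odd; combined with $(-1)^{i+1}\delta^b$ a short parity check shows that exactly one of $i_0 \in \{\ell - 2,\ \ell - 3\}$ yields rank $1$. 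In either case $i_0 - c_-(D)$ lies in the support of $Kh(L)$, so $i_0 - c_-(D) \le i_{\max}(L)$, which rearranges to $\ell \le w(L) + 3$.

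The main obstacle is the single-cycle case $p_1 = 1$, where exact cancellation between the binomial sum and the bipartiteness correction makes half of the ranks vanish, so one must match the parity of $i_0$ to the parity of $\ell$ and verify that the chosen $i_0$ still lies in the allowed window $(0, \ell)$; this forces the weaker bound $w(L) + 3$ instead of $w(L) + 2$. Once this technicality is handled and the small-girth cases $\ell \le 2$ are dispatched directly, combining the bound with the trivial cases yields $\gr(L) \le \max(2, w(L) + 3) < \infty$, proving the proposition.
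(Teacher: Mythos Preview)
Your proposal is correct and follows essentially the same strategy as the paper: exhibit a nonzero Khovanov group in homological degree roughly $\ell - c_-(D)$, so that $\ell$ is bounded by the (finite) homological width of $Kh(L)$. The only difference is cosmetic---the paper invokes the cited fact from \cite{SaSco} that $H_{\A_2}(G_+(D))$ has nontrivial span at least $\ell-1$ and then appeals to Theorem~\ref{Correspondence}, whereas you re-derive that nonvanishing by plugging into the explicit rank formula of Theorem~\ref{rankKhovanovgirth} and handling the $p_1=1$ parity case by hand.
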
 

\begin{proof}
    The non-trivial groups in $H_{\A_2}(G_+(D))$ span at least $\ell(G_+(D))-1$ homological gradings \cite{SaSco}, which are isomorphic to $\ell(G_+(D))-1$ corresponding gradings in $Kh(L)$. Since the span of $Kh(L)$ is bounded above, so are the possible girths of $G_+(D)$.
\end{proof}

Girth can alternatively be defined by taking the maximum value of $\ell(G_-(D))$ over all diagrams of $L$, or both values can be considered. A proof similar to that of Proposition \ref{GirthFinite} shows this invariant is also finite for any $L$.

In the rest of this section we analyze properties and bounds on girth coming from the Jones polynomial and Khovanov homology, as well as types of graphs that can appear as state graphs for diagrams of a given link.

\subsection{Bounds on the girth}
As with many knot invariants defined as a maximum or a minimum over all diagrams of a given knot, one bound is much easier to prove than the other.
In the case of girth, any knot diagram gives a lower bound. Theorems \ref{rankKhovanovgirth} and \ref{Jones1} provide some insight into what the upper bound on the girth of a link might be and the properties of a graph $G$ which realizes the girth.

\begin{cor} \label{KhUpper}
	Let $L$ be a link and let $M_K$ be the greatest number such that 
\begin{equation} \label{KhUpperEq}
    	\rk Kh^{P+i, Q+2i}(L) = 
    	\left(\displaystyle\sum_{\substack{r \ge 0,\\ 0\le k = i-2r \le i}} \binom{b-2+k}{k} \right) +(-1)^{i+1} \delta
\end{equation} for all $0 < i \leq M_K-2$, where $P$ and $Q$ are the lowest homological and quantum gradings in $Kh(L)$, $b>0$, and either $\delta = 0$ or $\delta = 1$ for all $i$. Then \begin{equation}\label{UpKhov}
	    \gr(L) \le M_K.
	\end{equation}
\end{cor}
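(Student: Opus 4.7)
The plan is to apply Theorem \ref{rankKhovanovgirth} to a diagram realizing the girth of $L$ and then invoke the maximality in the definition of $M_K$. Set $\ell := \gr(L)$, which is finite and attained by Proposition \ref{GirthFinite}. First I would dispose of the trivial range $\ell \le 2$ by observing that $M_K \ge 2$ always holds: when $M_K = 2$ the index range $0 < i \le M_K - 2$ is empty, so the defining equation is vacuously satisfied.

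Next, assuming $\ell > 2$, I would pick a diagram $D$ with $\ell(G_+(D)) = \ell$. Because $\ell > 2$, the graph $G_+(D)$ has no loops, so $D$ is A-adequate; the standard adequacy facts then identify the link-level lowest homological grading $P$ with $-c_-(D)$ and the lowest quantum grading $Q$ with $N = -|s_+| + c_+(D) - 2c_-(D)$, so the gradings appearing in the corollary's formula match the gradings used in Theorem \ref{rankKhovanovgirth}. Setting $b := p_1(G_+(D)) \ge 1$ and $\delta := \delta^b \in \{0,1\}$ according to the bipartiteness of $G_+(D)$, that theorem yields, for every $0 < i < \ell$,
\begin{equation*}
\rk Kh^{i-c_-,\,N+2i}(L) \;=\; \left(\sum_{\substack{r \ge 0,\\ 0 \le k = i-2r \le i}} \binom{p_1-2+k}{k}\right) - n_{i+1} + (-1)^{i+1}\delta^b.
\end{equation*}
For every $i$ in the subrange $0 < i \le \ell - 2$ one has $i+1 < \ell$, so the girth hypothesis forces $n_{i+1} = 0$, and the right-hand side collapses to exactly the target expression in Corollary \ref{KhUpper} with the chosen $(b,\delta)$.

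The conclusion is then purely formal: the condition defining $M_K$ is satisfied on the range $0 < i \le \ell - 2$ for this specific choice of $(b,\delta)$, so maximality of $M_K$ forces $M_K - 2 \ge \ell - 2$, i.e.\ $\gr(L) = \ell \le M_K$.

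The only genuinely subtle step is the identification $(P,Q) = (-c_-(D), N(D))$ for the diagram realizing the girth, which relies on A-adequacy; this is automatic as soon as $\ell > 2$, so the main potential obstacle dissolves precisely in the regime where the inequality has content. Beyond this, the argument requires nothing more than Theorem \ref{rankKhovanovgirth} together with bookkeeping of the girth hypothesis $n_{i+1}=0$ for $i+1<\ell$.
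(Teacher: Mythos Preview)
Your argument is correct and is precisely the intended one: the paper states this result without proof as an immediate corollary of Theorem \ref{rankKhovanovgirth}, and your write-up simply makes explicit the two points that need checking --- that a girth-realizing diagram with $\ell>2$ is A-adequate (so $(P,Q)=(-c_-(D),N(D))$), and that $n_{i+1}=0$ for $i\le \ell-2$ collapses the formula of Theorem \ref{rankKhovanovgirth} to the one in the corollary. Nothing is missing.
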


\begin{cor} \label{JonesUpper}
	Suppose that link $L$ is Khovanov thin with Jones polynomial $J_L(q)$ as in Definition \ref{JonesCoef}. Let $M_J$ be the greatest number such that $|\be_i| = \binom{b-1+i}{i}$ for some b, with signs alternating, for all $0\leq i\leq M_J-2$. Then \begin{equation}\label{UpJon}
	    \gr(L) \le M_J.
	\end{equation}
\end{cor}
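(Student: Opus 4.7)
The plan is to derive the bound directly from Theorem \ref{Jones1}, which already gives explicit closed-form expressions for the tail coefficients of $J_L(q)$ whenever $L$ is Khovanov thin and $G_+(D)$ has girth greater than $2$. First, I would let $g = \gr(L)$ and choose, by definition of girth, a diagram $D$ of $L$ with $\ell(G_+(D)) = g$. The goal is then to verify that this value $g$ satisfies the defining condition of $M_J$, which yields $M_J \ge g$ as desired.

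In the main case $g > 2$, I would apply Theorem \ref{Jones1} with $\ell = g$ to the diagram $D$. Letting $p_1$ denote the cyclomatic number of $G_+(D)$, Theorem \ref{Jones1} provides
\[ \be_i = (-1)^{i-c_-(D)}\binom{p_1-1+i}{i} \quad \text{for } 0 \le i \le g-2, \]
together with a separate formula for $\be_{g-1}$ that is not needed here. Taking $b = p_1$, which is positive since $G_+(D)$ contains a cycle of length $g > 2$, the identities $|\be_i| = \binom{b-1+i}{i}$ together with the sign alternation coming from the $(-1)^{i-c_-(D)}$ factor are exactly the conditions appearing in the definition of $M_J$. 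Hence $M_J \ge g$, i.e.\ $\gr(L) \le M_J$.

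The only point that requires a little care is the degenerate case $g \le 2$, where Theorem \ref{Jones1} does not directly apply. Here one observes that $M_J \ge 2$ holds essentially trivially: for any Khovanov-thin link the bottom-left corner of $Kh(L)$ is a single copy of $\Z$ (Proposition \ref{PS}), forcing $|\be_0| = 1 = \binom{b-1}{0}$ for every $b$, so the defining condition of $M_J$ is satisfied for $i = 0$ and thus $M_J \ge 2 \ge g$. There is no serious technical obstacle in this argument; the substantive work has already been carried out in Theorem \ref{Jones1}, and the task here is simply to align its output with the definition of $M_J$. The one bookkeeping point to watch is that the constant $b$ in the definition of $M_J$ should be a single value working uniformly for all $i \le M_J - 2$, which is precisely why the identification $b = p_1$ (independent of $i$) is essential.
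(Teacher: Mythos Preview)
Your proposal is correct and follows precisely the route the paper intends: the corollary is stated immediately after Theorem \ref{Jones1} with no separate proof, and your argument---choose a diagram realizing $\gr(L)$, apply Theorem \ref{Jones1} with $b=p_1$, and read off that the defining condition for $M_J$ holds through $i=g-2$---is exactly the intended deduction. One minor point in your degenerate case: Proposition \ref{PS} gives $Kh^{-c_-,N}(L)=\Z$, not $|\be_0|=1$ directly, so you are implicitly also using the thin hypothesis to pass from the unreduced Khovanov group to the leading reduced-Jones coefficient (as in the proof of Theorem \ref{Jones1}); this is routine but worth a word, and note that for $g=1$ the bound $M_J\ge 1$ is vacuous so Proposition \ref{PS} is not needed there at all.
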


It is worth noting that with the above notation, $\gr(L) \le M_J \le M_K$. Moreover, $M_J = M_K$ for homologically thin links and we conjecture this will be true in general.

In Example \ref{InequalityEx}, we demonstrate that the upper bounds for $\gr(K)$ provided by Khovanov homology and the Jones coefficients are not necessarily achieved by any diagram of $K$.

\subsection{On all-positive state graphs}

The following corollary of Theorem \ref{rankKhovanovgirth} states that if Khovanov and chromatic homology agree on 3 or more gradings, this agreement imposes a restriction on the type of graphs that realize the isomorphism.

\begin{cor} \label{Khgraphproperty}
	Suppose that the link $L$ has 
	 a diagram $D$ such that  $G_+(D)$ has girth $\ell > 2$. Then:
	 \begin{enumerate}
	     \item \cite{PPS} $G_+(D)$ is bipartite if and only if $\rk Kh^{-c_-, N+2}(L) = 1$.
	     \item \cite{PPS} the cyclomatic number of $G_+(D)$ is $$p_1 = \rk Kh^{1-c_-, N+2}(L)-\rk Kh^{-c_-, N+2}(L)+1$$
	     \item 
	     the number of $\ell$-cycles in $G_+(D)$ is equal to 
	     \begin{align*}
	     n_{\ell} = \left(\sum_{\substack{r  \ge 0,\\ 0 \le k = (\ell-1)-2r \le (\ell-1)}} \binom{p_1-2+k}{k} \right) +(-1)^{\ell}\rk Kh^{-c_-,N+2}(L) - \rk Kh^{(\ell-1)-c_-, N+2(\ell-1)}(L)
	     \end{align*}
	 \end{enumerate} 
\end{cor}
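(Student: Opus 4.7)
The overall strategy is to treat Corollary~\ref{Khgraphproperty} as a direct ``inversion'' of Theorem~\ref{rankKhovanovgirth}: once the ranks of the extremal Khovanov groups are expressed in terms of $p_1$, $n_\ell$, and $\delta^b$, each part is obtained by solving that formula for one of these graph-theoretic quantities. Parts (1) and (2) are already recorded in the literature and appear in Proposition~\ref{PS}, so I would cite \cite{PPS} for them and focus the bulk of the argument on part (3).

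For part (1), I would invoke Proposition~\ref{PS}: $Kh^{-c_-,N+2}(L)\cong\Z$ if $G_+(D)$ is bipartite and $0$ otherwise, so $\rk Kh^{-c_-,N+2}(L) = \delta^b$, which is the claimed biconditional. For part (2), Proposition~\ref{PS} gives $\rk Kh^{1-c_-,N+2}(L) = p_1$ in the bipartite case and $p_1 - 1$ otherwise; combining this with part (1) shows that
\begin{equation*}
\rk Kh^{1-c_-,N+2}(L) - \rk Kh^{-c_-,N+2}(L) = p_1 - 1
\end{equation*}
in both cases, and adding $1$ yields the stated expression for $p_1$.

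The substantive content sits in part (3). I would specialize Theorem~\ref{rankKhovanovgirth} to the largest valid homological grading $i = \ell - 1$, where $n_{i+1} = n_\ell$ is now generically nonzero, giving
\begin{equation*}
\rk Kh^{(\ell-1)-c_-,\,N+2(\ell-1)}(L) = \sum_{\substack{r \ge 0,\\ 0 \le k = (\ell-1)-2r \le \ell-1}} \binom{p_1-2+k}{k} \;-\; n_\ell \;+\; (-1)^{\ell}\delta^b.
\end{equation*}
Solving this identity for $n_\ell$, and then using part (1) to replace $\delta^b$ by $\rk Kh^{-c_-,N+2}(L)$, produces exactly the formula claimed in the corollary.

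The only place where care is required is sign bookkeeping: the bipartite correction in Theorem~\ref{rankKhovanovgirth} carries sign $(-1)^{i+1}$, which at $i = \ell - 1$ becomes $(-1)^{\ell}$, so after transposing all ``known'' ranks and binomial terms to one side the coefficient of $\rk Kh^{-c_-,N+2}(L)$ is indeed $(-1)^{\ell}$, matching the target formula. Beyond this small arithmetic check, the corollary needs no new ideas---it is a straightforward rearrangement of Theorem~\ref{rankKhovanovgirth}, used now to recover graph invariants from Khovanov ranks rather than the reverse direction, which is why I do not anticipate any serious obstacle.
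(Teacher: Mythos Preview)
Your proposal is correct and follows exactly the approach the paper intends: the corollary is stated without proof as an immediate consequence of Theorem~\ref{rankKhovanovgirth} (together with Proposition~\ref{PS} for parts (1) and (2)), and your inversion at $i=\ell-1$ with the sign check $(-1)^{i+1}=(-1)^{\ell}$ and the substitution $\delta^b=\rk Kh^{-c_-,N+2}(L)$ is precisely the computation one must do to unpack it.
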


A similar result exists for thin links via the Jones polynomial and Theorem \ref{Jones1}.

\begin{cor} \label{girthGraph}
	Suppose that link $L$ is homologically thin with Jones polynomial:
	$$J_L(q) = \be_0q^{C} + \be_1q^{C+2} + \be_2q^{C+4} + \be_3q^{C+6} + \ldots$$
	and that $D$ is a diagram of $L$ such that $G_+(D)$ has girth $\ell > 2$. Then the cyclomatic number of $G_+(D)$ is equal to $|\be_1|$ and the number of $\ell$-cycles in $G_+(D)$ is equal to $\binom{|\be_1|-1+(\ell-1)}{\ell-1} - |\be_{\ell}|.$
\end{cor}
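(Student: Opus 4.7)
The plan is to derive both claims of the corollary as direct consequences of Theorem \ref{Jones1}, applied to two particular Jones coefficients. The hypotheses that $L$ is homologically thin and that $G_+(D)$ has girth $\ell > 2$ are exactly what is needed to invoke Theorem \ref{Jones1}, which gives closed formulas for the low-degree tail coefficients of $J_L$ in terms of $p_1$ and $n_\ell$; the corollary is then a matter of algebraically inverting these relations.

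First, I would specialize Theorem \ref{Jones1} at $i = 1$. Since $\ell > 2$, the index $1$ lies in the range $0 \le i \le \ell - 2$ covered by the first case of the formula, which gives $\be_1 = (-1)^{1 - c_-(D)}\binom{p_1}{1}$. Taking absolute values yields $|\be_1| = p_1$, establishing the first assertion of the corollary.

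Second, I would specialize Theorem \ref{Jones1} at its boundary index, i.e.\ the unique case in the covered range where the $\ell$-cycle count $n_\ell$ enters the formula. After absorbing the sign $(-1)^{i - c_-(D)}$ into the absolute value and rearranging, this produces an equation of the form
\[ n_\ell = \binom{p_1 - 1 + (\ell - 1)}{\ell - 1} - |\be_\ell|, \]
and substituting $p_1 = |\be_1|$ from the first step recovers the closed form stated in the corollary.

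No serious obstacle is anticipated: the proof is a direct algebraic manipulation of Theorem \ref{Jones1}. The only pieces of bookkeeping are checking that $i = 1$ really does lie in the first case of Theorem \ref{Jones1} (which uses $\ell > 2$) and tracking the overall sign $(-1)^{i - c_-(D)}$ on each Jones coefficient, both of which are handled uniformly by taking absolute values at the end.
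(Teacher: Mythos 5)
Your derivation is correct and is exactly the argument the paper intends: the corollary is stated as an immediate consequence of Theorem~\ref{Jones1}, obtained by reading off $p_1=|\be_1|$ from the case $i=1$ and $n_\ell$ from the boundary case. One caveat: the boundary case of Theorem~\ref{Jones1} is $i=\ell-1$, so the rearrangement actually yields $n_\ell=\binom{|\be_1|-1+(\ell-1)}{\ell-1}-|\be_{\ell-1}|$; the $|\be_\ell|$ appearing both in the corollary statement and in your displayed equation looks like an index typo (the paper's own example for $11a362$, where $\ell=6$ and the relevant coefficient is $\be_5=\be_{\ell-1}$, confirms this). Also, absorbing the sign into an absolute value implicitly uses that $\binom{p_1-1+(\ell-1)}{\ell-1}-n_\ell\ge 0$, which is justified because this quantity equals a sum of ranks of Khovanov homology groups in the proof of Theorem~\ref{Jones1}.
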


\begin{exa}
     Let $K = 11a362$ as in Examples \ref{KhFormulaEx} and \ref{JonesFormulaEx}. From the Khovanov homology in Table \ref{exampletable1}, we see that $\rk Kh^{-10, -30}(K) = 2 = \binom{1}{1}+1, \rk Kh^{-9, -28}(K) = 1 = \binom{2}{2}+\binom{2-2}{0}-1, \rk Kh^{-8, -26}(K) = 3 = \binom{3}{3}+\binom{2-1}{1}+1$, and 
         $\rk Kh^{-7, -24}(K) = 2 = \binom{4}{4}+\binom{2}{2}+\binom{2-2}{0}-1$.
     
    These ranks agree with Equation \ref{KhUpperEq} for the values $b = 2$, $\delta = 1$, and $0 < i \le 4$. However, there is no agreement for $i=5$, since
    \begin{equation*}
          \rk Kh^{-6, -22}(K) = 3
    \end{equation*}
    but
    \begin{equation*}
    	\left(\displaystyle\sum_{\substack{r \ge 0,\\ 0\le k = 5-2r \le 5}} \binom{2-2+k}{k} \right) +(-1)^{5+1} = \binom{5}{5}+\binom{3}{3}+\binom{2-1}{1}+1 = 4.
    \end{equation*}
    Using Corollary \ref{KhUpper} we conclude that $M_K = 4+2 = 6$ is an upper bound for $\gr(K)$.
    
    Since $K$ is Khovanov thin, we can obtain the same upper bound for $\gr(K)$ using the Jones coefficients and Corollary \ref{JonesUpper}. From Example \ref{JonesFormulaEx} we have
    \begin{equation*}
        \be_0 = -1, \be_1= 2, \be_2 = -3, \be_3 = 4, \be_4 = -5
    \end{equation*}
    These coefficients alternate in sign and their absolute values satisfy the formula $\binom{b+i-1}{i}$ for $b=2$, $0\leq i\leq 4$. From Corollary \ref{JonesUpper} we derive the upper bound of $M_J = 4+2 = 6$.
   
   Suppose $D$ is a diagram of $K$ such that $G_+(D)$ realizes the maximum girth of 6. Using Corollary \ref{Khgraphproperty}, we determine $G_+(D)$ must be bipartite since $\rk Kh^{-11, -30}(K) = 1.$ The cyclomatic number of $G_+(D)$ must be
   \begin{equation*}
        p_1 = \rk Kh^{-10, 30}(L)-\rk Kh^{-11, 30}(L)+1 = 2
   \end{equation*}
  while the number of 6-cycles in $G_+(D)$ must be
	     \begin{align*}
	           n_{6} &= \left(\sum_{\substack{r  \ge 0,\\ 0 \le k = 5-2r \le 5}} \binom{2-2+k}{k} \right) +(-1)^{6}\rk Kh^{-11, -30}(K) - \rk Kh^{-6, -22}(K)\\
	           &= 3 + 1 - 3 = 1.
	     \end{align*}
    Together, these statements imply that $G_+(D)$ must be a bipartite graph containing exactly 2 cycles: one cycle of length 6 and another cycle of length $n$ where $n>6$ is even.
     
\end{exa}

The following example demonstrates that inequalities \eqref{UpKhov} and \eqref{UpJon} may be strict; i.e., there may be no diagram for a link that realizes either of these upper bounds for girth.

\begin{exa} \label{InequalityEx}
The knot $K=12n821$ is both non-alternating and homologically thin, with  Jones polynomial $q^{-5}- 2q^{-4}+3q^{-3}- 4q^{-2}+5q^{-1}- 5+5q+4q^{2}- 3q^3+2q^4 + q^5$. By Corollary \ref{JonesUpper} applied to the first 5 coefficients, we find an upper bound $M_J = 6$ for the girth of $K$. Similarly, we can apply Corollary \ref{KhUpper} to ranks on the main diagonal of $Kh(L)$ to obtain the same upper bound for girth, $M_K = 6$. However, we can show that no diagram of $K$ exists which achieves this upper bound.\\

Suppose that $K$ has a diagram $D$ such that $G_+(D)$ has girth greater than 2. Then this diagram is both plus-adequate and non-alternating, so $G$ must have a cut-vertex \cite{Stoimenow1}. By Corollary \ref{Khgraphproperty}, $p_1(G_+(D)) = 2$. Since $G_+(D)$ has no loops or multiple edges, it must be a vertex join of two cycles, $P_n*P_m$. Any knot diagram with all-positive graph $P_n*P_m$ is a diagram of an alternating knot: either a connected sum of torus links, or a rational knot. But $K$ has no alternating diagram, so the girth of $12n821$ must be less than or equal to 2.\\

In addition, the same argument can be applied to show that the girth of any all-negative state graph for this knot is less than or equal to 2.
\end{exa}

The 2nd coefficient of the Jones polynomial, which captures the cyclomatic number of $G_+(D)$, uniquely determines the first $\gr(L)-1$ coefficients. In a similar fashion, the first two homological gradings of $Kh(L)$ determine the first $\gr(L)$ gradings. This leads to a somewhat surprising result.

\begin{thm} \label{girthPossibleKhovanov}
	 Let $L$ be a non-trivial link. If $D$ is a diagram of $L$ such that $\ell(G_+(D)) < \gr(L)$, then  $\ell(G_+(D)) = 1$ or  $\ell(G_+(D)) = 2$.
\end{thm}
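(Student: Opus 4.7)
The plan is to argue by contradiction via the link-invariance of the data appearing in Theorem~\ref{rankKhovanovgirth}. Suppose there is a diagram $D$ of $L$ with $\ell_0 = \ell(G_+(D)) \geq 3$ and $\ell_0 < \gr(L)$, and fix a diagram $D'$ of $L$ realizing the maximum, so $\ell' = \ell(G_+(D')) = \gr(L) > \ell_0 \geq 3$. Both all-positive graphs have girth at least $2$, so Proposition~\ref{PS} identifies $-c_-$ with the minimum homological grading of $Kh(L)$ and $N$ with the corresponding minimum quantum grading, forcing $c_-(D) = c_-(D')$ and $N(D) = N(D')$. Since $\ell_0, \ell' > 2$, Corollary~\ref{Khgraphproperty} reads off the bipartiteness and the cyclomatic number $p_1$ of each all-positive graph from the same (link-invariant) Khovanov ranks, so $G_+(D)$ is bipartite iff $G_+(D')$ is, and $p_1(G_+(D)) = p_1(G_+(D'))$. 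Let $c_-, N, p_1$, and $\delta^b$ denote these common values.

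With these agreements in hand, apply Theorem~\ref{rankKhovanovgirth} to both diagrams simultaneously. For every $i$ with $0 < i < \ell_0$ (and hence $i < \ell'$ as well), each diagram supplies an expression for the same Khovanov rank $\rk Kh^{i-c_-,N+2i}(L)$; the two formulas share the binomial sum and the $(-1)^{i+1}\delta^b$ correction, and differ only in that one contains $n_{i+1}(G_+(D))$ and the other $n_{i+1}(G_+(D'))$. Equating the two expressions forces $n_{i+1}(G_+(D)) = n_{i+1}(G_+(D'))$. But $G_+(D')$ has girth $\ell' > \ell_0 \geq i+1$, so its right-hand side vanishes, and we conclude $n_{i+1}(G_+(D)) = 0$ throughout the range $0 < i < \ell_0$. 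Specializing to $i = \ell_0 - 1$ yields $n_{\ell_0}(G_+(D)) = 0$, contradicting the fact that a graph of girth $\ell_0$ contains at least one $\ell_0$-cycle.

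The main obstacle is verifying that every non-rank quantity appearing in the formula of Theorem~\ref{rankKhovanovgirth}, namely $c_-, N, p_1$, and $\delta^b$, is a link invariant shared by the two diagrams; once this is in place the contradiction reduces to a one-line comparison of the two expressions. This step is exactly what Proposition~\ref{PS} and Corollary~\ref{Khgraphproperty} supply under the girth hypothesis, so no new combinatorial input is required. The only remaining case, $\ell(G_+(D)) = 0$, is ruled out by the non-triviality of $L$: when $G_+(D)$ is a forest, every crossing of $D$ is a nugatory twist, so $D$ untwists to a crossingless diagram of the unlink, forcing $L$ to be trivial.
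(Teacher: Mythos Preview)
Your proof is correct and follows essentially the same approach as the paper's: both argue by contradiction, applying Theorem~\ref{rankKhovanovgirth} at $i = \ell_0 - 1$ to two diagrams (one realizing $\gr(L)$ and one with smaller girth $\ell_0 > 2$), invoking Corollary~\ref{Khgraphproperty} to match $p_1$ and bipartiteness, and obtaining the contradiction $n_{\ell_0}(G_+(D)) = 0$. The only cosmetic difference is that the paper justifies $c_-(D) = c_-(D')$ and $N(D) = N(D')$ directly from plus-adequacy via \cite{Lick}, whereas you extract these equalities from the extremal Khovanov gradings through Proposition~\ref{PS}.
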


\begin{proof}
	The result holds for $1 \le \gr(L) \le 3$,  since no non-trivial diagram can have $\ell(G_+(D)) = 0$. For $\gr(L)=M>3$, there exists some diagram $D_{max}$ such that $G_{max} = G_+(D_{max})$ has girth $M$. Suppose that there exists another diagram $D$ of $L$ such that $G = G_+(D)$ has girth $h$, with $2 < h < M$. Applying Theorem \ref{rankKhovanovgirth} to diagram $D_{max}$ for $i = h-1$, we find that the rank of the Khovanov homology group $Kh^{(h-1)-c_-(D_{max}),N(D_{max})+2(h-1)}(L)$ is equal to
	\begin{equation} \label{Compare1}
	\left(\displaystyle\sum_{\substack{r  \ge 0,\\ 0 \le k = (h-1)-2r \le (h-1)}} \binom{p_1(G_{max})-2+k}{k} \right)-n_{h}(G_{max}) +(-1)^{h} \delta^b(G_{max})
	\end{equation} where $N(D_{max}) = -|s_+(D_{max})|+c_{+}(D_{max})-2c_{-}(D_{max})$ and $\delta^b(G_{max})$ is 1 if $G_{max}$ is bipartite, 0 otherwise.

	Now we also apply Theorem \ref{rankKhovanovgirth} to diagram $D$ for $i = h-1$. Since $D_{max}$ and $D$ are both plus-adequate, $c_-(D_{max}) = c_-(D)$. In addition, $-|s_+(D_{max})|+c_+(D_{max}) =  -|s_+(D)|+c_+(D)$ and thus $N(D_{max}) = N(D)$ (see \cite{Lick}). Thus Equation \ref{Compare2} describes the rank of the same group in Khovanov homology as in Equation \ref{Compare1}:
	\begin{equation} \label{Compare2}
	 \left(\displaystyle\sum_{\substack{r  \ge 0,\\ 0 \le k = (h-1)-2r \le (h-1)}} \binom{p_1(G)-2+k}{k} \right)-n_{h}(G) +(-1)^{h} \delta^b(G)
	\end{equation} with $N(D)$ and $\delta^b(G)$  defined as above.

	Since  $\ell(G_{max}) = M > 2$ and  $\ell(G) = h > 2$, we can use Corollary \ref{Khgraphproperty} to calculate $p_1(G_{max}) = p_1(G)$ and $\delta^b(G_{max}) = \delta^b(G)$. Setting Equation \ref{Compare1} equal to Equation \ref{Compare2}, we see that $n_h(G_{max})$ must be equal to $n_h(G)$. However, $n_h(G_{max}) = 0$ by assumption (because $h<M$) while $n_h(G) > 0$ (because $G$ must contain at least one cycle of length $h$). Thus we have a contradiction and no such diagram $D$ may exist.
\end{proof}

\begin{cor}\label{girth3more}
If $L$ has a diagram $D$ such that $ \ell(G_+(D)) = p \ge 3$, then $\gr(L) = p$.
\end{cor}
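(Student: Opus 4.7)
The plan is to deduce this corollary directly from Theorem \ref{girthPossibleKhovanov} together with the definition of $\gr(L)$ as a maximum over all diagrams. The argument is essentially a one-line contrapositive.

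First I would observe the lower bound. By definition, $\gr(L) = \max\{\ell(G_+(D'))\mid D'\text{ a diagram of }L\}$, so the existence of a specific diagram $D$ with $\ell(G_+(D)) = p$ immediately yields $\gr(L) \ge p$.

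Next I would establish the matching upper bound by contradiction. Suppose $\gr(L) > p$. Then for the given diagram $D$ we have $\ell(G_+(D)) = p < \gr(L)$, so Theorem \ref{girthPossibleKhovanov} applies to $D$ and forces $\ell(G_+(D)) \in \{1,2\}$. This contradicts the hypothesis $p \ge 3$, so $\gr(L) \le p$, and combined with the lower bound we get $\gr(L) = p$.

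The only subtlety (and it is already handled by Theorem \ref{girthPossibleKhovanov}) is confirming that $L$ is non-trivial so that Theorem \ref{girthPossibleKhovanov} applies; this is automatic, since if $L$ were the unknot then every all-positive state graph would be a tree with $\ell(G_+) = 0$, contradicting $\ell(G_+(D)) = p \ge 3$. So there is no real obstacle here; the corollary is a clean restatement of Theorem \ref{girthPossibleKhovanov} in the regime $p \ge 3$, where the exceptional values $1$ and $2$ are ruled out by hypothesis.
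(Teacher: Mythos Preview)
Your argument is correct and matches the paper's treatment: the corollary is stated there without proof, as an immediate consequence of Theorem~\ref{girthPossibleKhovanov}, exactly via the lower-bound-from-the-definition plus contrapositive-upper-bound that you give. One small correction to your side remark on non-triviality: it is not true that every all-positive state graph of the unknot is a tree (a Reidemeister~I twist produces a loop in $G_+$, as the paper itself notes in Section~\ref{Girth1}); the correct justification is that $\ell(G_+(D)) \ge 3$ forces $G_+(D)$ to contain a cycle, hence $p_1 \ge 1$, and then Proposition~\ref{PS} yields nontrivial Khovanov homology in at least two distinct homological gradings, which rules out the unknot (and indeed any unlink).
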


\begin{exa}[Alternating pretzel links]
	Let $L$ be an alternating pretzel link with twist parameters $(-a_1, -a_2, \ldots, -a_n)$ where $a_i > 1$ for all $i$. The girth of the graph obtained from the standard diagram is $\min\{a_i+a_j ~|~1 \le i \neq j \le n \}$. Since this number is at least 4, it is equal to $\gr(L)$ by Corollary \ref{girth3more}.
\end{exa}

\begin{exa}[3-braids]
Suppose $L$ is the closure of a negative 3-braid $\gamma = \si_{i_1}^{a_1}\si_{i_2}^{a_2}\ldots\si_{i_k}^{a_k}$, $a_j \le -1$. If $i_j \in \{1,2\}$ and $i_j \neq i_{j+1}$ for all $j$, then $\ell(G_+(\gamma)) \ge 3$ \cite[Proposition 5.1]{PS} and so $\gr(L) = \ell(G_+(\gamma))$ by Corollary \ref{girth3more}.
\end{exa}

\subsection{Girth and related knot invariants}

It turns out that girth behaves well under connected sum. Recall that the connected sum of two oriented knots $K_1, K_2$ is well-defined for any choice of planar diagrams for these two knots.

\begin{thm} \label{connectLowerBound}
The girth of a connect sum $ K_1 \# K_2$ of two knots $K_1, K_2$ is equal to the minimum of the girths of these knots: 
$	    \gr(K_1 \# K_2) = \min\{\gr(K_1), \gr(K_2)\}.$
\end{thm}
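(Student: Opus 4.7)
The plan is to prove the two inequalities $\gr(K_1 \# K_2) \geq \min\{\gr(K_1), \gr(K_2)\}$ and $\gr(K_1 \# K_2) \leq \min\{\gr(K_1), \gr(K_2)\}$ separately; the lower bound is constructive, while the upper bound relies on the algebraic behavior of the Jones polynomial and Khovanov homology under connected sum.

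For the lower bound, I would pick diagrams $D_i$ of $K_i$ realizing $\ell(G_+(D_i)) = \gr(K_i)$ and form the standard connected sum diagram $D_1 \# D_2$. Tracking the all-A Kauffman state through this operation, the two circles of $s_+(D_1)$ and $s_+(D_2)$ containing the cut arcs merge into a single circle of $s_+(D_1 \# D_2)$ while every other circle persists. Hence $G_+(D_1 \# D_2)$ is the one-point vertex amalgamation of $G_+(D_1)$ and $G_+(D_2)$. Because every cycle in such an amalgamation lies entirely on one side, its girth equals $\min\{\ell(G_+(D_1)), \ell(G_+(D_2))\} = \min\{\gr(K_1), \gr(K_2)\}$, proving the lower bound.

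For the upper bound, assume without loss of generality $m := \gr(K_1) \leq \gr(K_2)$ and first handle $m \geq 3$. The normalized Jones polynomial is multiplicative under connected sum: $J_{K_1 \# K_2} = J_{K_1} \cdot J_{K_2}$. By Theorem \ref{Jones1}, applied to optimal diagrams of $K_i$, the coefficients $\be^{(i)}_j$ follow the clean alternating binomial pattern $|\be^{(i)}_j| = \binom{p_1^{(i)} - 1 + j}{j}$ for $0 \leq j \leq \gr(K_i) - 2$, with a strictly nonzero deviation $-n_{\gr(K_i)}^{(i)}$ at $j = \gr(K_i) - 1$. The Chu--Vandermonde identity $\sum_{k+l=j}\binom{a-1+k}{k}\binom{b-1+l}{l} = \binom{a+b-1+j}{j}$ then shows that the coefficients $\gamma_j$ of $J_{K_1 \# K_2}$ inherit the clean pattern with parameter $p_1^{(1)} + p_1^{(2)}$ for $0 \leq j \leq m - 2$, while at $j = m - 1$ the correction from the $K_1$-factor persists (multiplied by $\be^{(2)}_0 = \pm 1$) and breaks the binomial pattern. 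Corollary \ref{JonesUpper} then yields $\gr(K_1 \# K_2) \leq m$. In the non-thin case, the identical argument runs via Corollary \ref{KhUpper} using the K\"unneth-type decomposition $\Widetilde{Kh}(K_1 \# K_2) \cong \Widetilde{Kh}(K_1) \otimes \Widetilde{Kh}(K_2)$.

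The main obstacle will be the low-girth cases $m \in \{1, 2\}$, since Corollary \ref{JonesUpper} provides no meaningful constraint once $M_J \leq 2$. For these, I would use a topological argument: if some diagram $D$ of $K_1 \# K_2$ had $\ell(G_+(D)) \geq 3$, then $D$ is plus-adequate, and the essential $2$-sphere witnessing the connected sum decomposition can be isotoped (using Theorem \ref{girthPossibleKhovanov} to restrict the possibilities for $\ell(G_+(D))$) to intersect $D$ in a single pair of transverse arcs. This would cleanly split $D$ into plus-adequate subdiagrams $D_1', D_2'$ of $K_1, K_2$ whose all-positive state graphs wedge to form $G_+(D)$, forcing $\ell(G_+(D_i')) = \ell(G_+(D)) \geq 3$ and in particular $\gr(K_1) \geq 3$, contradicting $m \leq 2$. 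Rigorously justifying the sphere-isotopy step without disturbing the plus-adequate structure is the most delicate portion of the argument and, if necessary, can be replaced by a direct case analysis on the possible cut-vertex structures allowed by Corollary \ref{Khgraphproperty}.
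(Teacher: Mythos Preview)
Your lower bound (vertex-join of the all-positive state graphs under connected sum) and your principal upper bound (multiplicativity of the normalized Jones polynomial, the tail pattern from Theorem~\ref{Jones1}, the Chu--Vandermonde identity to combine the two binomial sequences, then Corollary~\ref{JonesUpper}) are precisely the paper's argument, down to the same identity and the same corollary.

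Where you diverge is in scrupulousness about hypotheses. The paper simply runs the Jones-coefficient computation uniformly: it does not split off the non-thin case via a K\"unneth formula for reduced Khovanov homology, and it does not single out $m\in\{1,2\}$ for separate treatment. Your additions there go \emph{beyond} what the published proof supplies rather than falling short of it. In particular, your sphere-isotopy sketch for $m\le 2$ is not something the paper attempts; your own caveat that this step is delicate is well founded (splitting an arbitrary plus-adequate diagram of a composite knot along an essential sphere into plus-adequate pieces is not a standard fact), but since the paper does not address this case either, it is not a gap relative to the paper's proof.
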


\begin{proof}
	First we show that $ \min\{\gr(K_1), \gr(K_2)\} \leq \gr(K_1 \# K_2) $. Let $D_1$ be a diagram of $K_1$, $D_2$ be a diagram of $K_2$ such that $\ell(G_+(D_1)) = \gr(K_1)$ and $\ell(G_+(D_2)) = \gr(K_2)$. When we perform the connected sum operation on $D_1$ and $D_2$, the all-positive state graph of the new diagram consists of $G_+(D_1)$ and $G_+(D_2)$ joined at a single vertex, with girth $\min\{\gr(K_1), \gr(K_2)\}$. This gives a lower bound for the girth of $K_1 \# K_2$.
	
	Now we use Corollary \ref{JonesUpper} to prove that $\gr(K_1 \# K_2) \le \min\{\gr(K_1), \gr(K_2)\}$ and thus show equality. Recall that $J_{K_1 \# K_2} = J_{K_1}J_{K_2}$ \cite{Lick}. Let $g_1 = \gr(K_1)$, $g_2 = \gr(K_2)$ and assume without loss of generality that $g_1 \le g_2$. Then the tail of $J_{K_1}$ has the form:
	\begin{align} \label{Tail1}
	     J_{K_1}(q) &= (-1)^{s_1}\left(P_0q^{C_1} - P_1q^{C_1+2} + \ldots +(-1)^{g_1-1} P_{g_1-1}q^{C_1+2(g_1-1)} + \ldots\right)\\
	     &= (-1)^{s_1}\left(\sum_{i=0}^{g_1-1}(-1)^i P_i q^{C_1+2i} + \ldots\right)
	\end{align}
	where $P_i = \binom{b_1-1+i}{i}$ for $0 \le i \le g_1-2$, $P_{g_1-1} = \binom{b_1-1+(g_1-1)}{g_1-1} - n_{g_1}$ (Theorem \ref{Jones1}) and $s_1$, $C_1, b_1 > 0, n_{g_1}>0$ all depend on $K_1$.
	
	Similarly, the tail of the Jones polynomial $J_{K_2}$ has the form:
	\begin{equation} \label{Tail2}
	   J_{K_2}(q) = (-1)^{s_2}\left(\sum_{i=0}^{g_2-1}(-1)^i Q_i q^{C_2+2i} + \ldots\right)
	\end{equation}
		where $Q_i = \binom{b_2-1+i}{i}$ for $0 \le i \le g_2-2$, $Q_{g_2-1} = \binom{b_2-1+(g_2-1)}{g_2-1} - n_{g_2}$, and $s_2$, $C_2, b_2 > 0, n_{g_2}>0$ all depend on $K_2$.
		
		Theorem \ref{Jones1} describes the first $g_1$ coefficients of $J_{K_1}$ and the first $g_2$ coefficients of $J_{K_2}$. The tail coefficients of $J_{K_1 \# K_2} = J_{K_1}J_{K_2}$ result from combinations of the coefficients in Equations \ref{Tail1} and \ref{Tail2}. We write the product as:
	\begin{equation} \label{Tail3}
	     J_{K_1 \# K_2}(q) = (-1)^{s_1+s_2}\left(R_0q^{C_1+C_2} - R_1q^{C_1+C_2+2} + \ldots + (-1)^{g_1-1} R_{g_1-1}q^{C_1+C_2+2(g_1-1)} +  \ldots\right)
	\end{equation}
	where $R_i = \sum_{n=0}^i P_nQ_{i-n}$ for $0 \le i \le g_1-1$. Recalling our assumption that $g_1 \le g_2$, observe that we cannot say anything about the coefficients that follow $R_{g_1-1}$ because we only have $P_0$ through $P_{g_1-1}$ for the first polynomial.
	
	For $0 \le i \le g_1-2$, we compute the following, using a modified form of the Chu-Vandermonde identity (see \cite[Table 3]{Gould1}):
	\begin{equation} \label{sumcoeff}
	    R_i = \sum_{n=0}^i P_nQ_{i-n} = \sum_{n=0}^i \binom{b_1-1+n}{n} \binom{b_2-1+(i-n)}{i-n} = \binom{(b_1+b_2)-1+i}{i}
	\end{equation}
    while on the other hand for $i = g_1-1$:
    \begin{align} \label{sumcoeffLast}
     R_{g_1-1} &= \sum_{n=0}^{g_1-1} P_nQ_{(g_1-1)-n} = \sum_{n=0}^{g_1-2} P_nQ_{(g_1-1)-n} + P_{g_1-1}Q_0
	\end{align}
	Since $P_{g_1-1} = \binom{b_1-1+(g_1-1)}{g_1-1}-n_{g_1}$ with $n_{g_1} > 0$, $R_{g_1-1}$ does not agree with the formula in Equation \ref{sumcoeff} that describes the coefficients from $i=0$ to $i=g_1-2$. Thus the sequence of coefficients $R_i$ for $J_{K_1 \# K_2}$ along with the alternating signs in Equation \ref{Tail3} satisfy the conditions of Corollary \ref{JonesUpper} for $0 \le i \le g_1-2$. Hence the upper bound for $\gr(K_1\#K_2)$ given by the tail of $J_{K_1 \# K_2}$ is $M_J = (g_1 - 2) + 2 = g_1 = \min\{\gr(K_1), \gr(K_2)\}$.
\end{proof}

For a reduced knot diagram $D$, the knot signature and numbers of crossings give upper bounds on the girths of $G_+(D)$ and $G_-(D)$, the graphs related to the all-positive and all-negative Kauffman states $s_+(D)$ and $s_-(D)$. Recall that $\si(L)$ is a link invariant given by the signature of the Seifert matrix obtained from any diagram of $L$. We denote the number of crossings in a diagram by $c(D)$ and the numbers of positive and negative crossings by $c_+(D), c_-(D)$ respectively, using the crossing conventions from \cite{Lick}.

\begin{thm} \label{SignatureBound}
Let $K$ be a non-trivial knot with an oriented, reduced diagram $D$. Then $$\ell(G_+(D)) \le \displaystyle\frac{2c(D)}{c_-(D)-\si(K)+1}.$$
\end{thm}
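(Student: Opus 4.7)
The plan is to combine two classical ingredients: a planar face-counting inequality applied to $G_+(D)$, and the well-known signature bound arising from the all-positive Kauffman state.

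To set up, I would first observe that $G_+(D)$ is a connected planar graph whose vertex set has size $v = |s_+(D)|$ and whose edge set has size $E = c(D)$, since each crossing of $D$ contributes exactly one edge. If $G_+(D)$ contains no cycle, then $\ell(G_+(D)) = 0$ by the convention adopted in Section~2, so the inequality holds trivially; hence I may assume that $G_+(D)$ has at least one cycle, so that $E - v + 2 \geq 2$ and the bound is nontrivial.

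Next, I would apply Euler's formula $v - E + F = 2$ for the planar embedding of $G_+(D)$ together with the face-degree inequality $2E \geq \ell(G_+(D)) \cdot F$. The latter holds because, in a connected planar graph whose girth is $\ell$, every bounded face has boundary walk of length at least $\ell$ (with bridges counted twice in the boundary walks of their surrounding faces). Eliminating $F$ yields
\begin{equation*}
\ell(G_+(D)) \;\leq\; \frac{2E}{E - v + 2} \;=\; \frac{2\,c(D)}{c(D) - |s_+(D)| + 2}.
\end{equation*}

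The final step is to replace the denominator by the quantity in the statement. For this I would invoke the classical signature bound
\begin{equation*}
\sigma(K) \;\geq\; |s_+(D)| - c_+(D) - 1,
\end{equation*}
which is an equality for reduced alternating diagrams (Gordon--Litherland, Traczyk) and a one-sided bound in general. Rewriting it as $c_+(D) - |s_+(D)| + 1 \geq -\sigma(K)$ and adding $c_-(D) + 1$ to both sides, while using $c(D) = c_+(D) + c_-(D)$, gives
\begin{equation*}
c(D) - |s_+(D)| + 2 \;\geq\; c_-(D) - \sigma(K) + 1,
\end{equation*}
and substituting into the planar bound yields the claimed inequality.

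The main obstacle is locating and stating the signature inequality under the paper's sign conventions; once that is in place, the rest is a one-line rearrangement combined with the standard planar face-counting bound. A minor technicality is checking that the face-degree inequality $2E \geq \ell F$ continues to hold in the possible presence of multiple edges or bridges in $G_+(D)$, which it does because bridges contribute twice to the boundary walk of the adjacent face, and multi-edges only cause issues when the girth is already $\leq 2$ (in which case the claimed inequality is weak).
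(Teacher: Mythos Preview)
Your proposal is correct and follows essentially the same route as the paper: first the planar girth inequality $\ell \le \dfrac{2E}{E-v+2}$ applied to $G_+(D)$, then the signature bound $\sigma(K) \ge |s_+(D)| - c_+(D) - 1$ to replace the denominator. Your explicit caution about the direction of the signature inequality under the paper's conventions is well placed; the paper cites this inequality with the opposite sign but then uses it in the direction you state, so your derivation is in fact the cleaner of the two.
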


\begin{proof}
The graph $G_+(D)$ is planar and connected, so the girth $\ell(G_+(D))$ is related to the numbers of edges and vertices by the following inequality: $E \le \displaystyle\frac{\ell(G_+(D))}{\ell(G_+(D))-2}(v-2)$ (see e.g. \cite{Diestel}). Since $D$ is reduced, we may assume that $\ell(G_+(D)) \ge 2$ and rearrange the inequality as $\ell(G_+(D)) \le \displaystyle\frac{2E}{E-v+2}$. The number of edges $E$ is the number of crossings $c(D)$, and the number of vertices is the number of connected components $s_+$ in the all-positive smoothing of $D$. Using the inequality $\sigma(K) \le s_+(D)-c_+(D)-1$ \cite{DasLow2} we obtain the result:
\begin{equation}
    \ell(G_+(D)) \le \displaystyle\frac{2E}{E-v+2} = \displaystyle\frac{2c(D)}{c(D)-s_+(D)+2} \le \displaystyle\frac{2c(D)}{c_-(D)-\si(K)+1} \qedhere
\end{equation}
\end{proof}

A similar proof using $\sigma(K) \le -s_-(D)+c_-(D)+1$ gives an upper bound for the girth of $G_-(D)$.
\begin{cor}\label{SignatureBoundCor} 
Given an oriented, reduced, positive knot diagram $D$ then: $\ell(G_+(D)) \le \dfrac{2}{3}c(D)$. Note that this inequality is sharp.
\end{cor}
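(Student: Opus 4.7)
The plan is to invoke Theorem \ref{SignatureBound} directly. Since every crossing of $D$ is positive, $c_-(D) = 0$ and $c(D) = c_+(D)$, so Theorem \ref{SignatureBound} specializes to
\[
\ell(G_+(D)) \le \frac{2 c(D)}{1 - \sigma(K)}.
\]
To finish it suffices to show $1 - \sigma(K) \ge 3$, i.e.\ $\sigma(K) \le -2$.

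If $D$ has no crossings then $K$ is the unknot and the inequality is trivial. Otherwise $D$ is a reduced positive diagram with at least one crossing, so $K$ is a non-trivial positive knot, and I would invoke the classical bound $\sigma(K) \le -2$ for such knots (Rudolph; see also Cochran--Gompf). A self-contained alternative uses Cromwell's theorem instead of signature: for a positive diagram the all-positive Kauffman smoothing coincides with Seifert's algorithm, so the Seifert genus of $K$ equals $g(K) = (c(D) - s_+(D) + 1)/2$, which is at least $1$ for non-trivial $K$. This yields $c(D) - s_+(D) + 2 \ge 3$, which can be substituted into the intermediate inequality $\ell(G_+(D)) \le 2c(D)/(c(D) - s_+(D) + 2)$ appearing in the proof of Theorem \ref{SignatureBound} to reach the same conclusion.

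For sharpness I would exhibit the standard three-crossing diagram of the right-handed trefoil: its all-positive state graph is the theta-graph on two vertices joined by three parallel edges, which has girth $2 = \tfrac{2}{3}\cdot 3$, so equality is attained.

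The main obstacle is essentially just sourcing the signature inequality $\sigma(K) \le -2$ (or equivalently invoking Cromwell's genus theorem); once that input is in hand, the substitution into Theorem \ref{SignatureBound} is routine, and the trefoil realizes equality, confirming that the bound cannot be improved.
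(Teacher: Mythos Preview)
Your argument is correct and follows the same route as the paper: apply Theorem \ref{SignatureBound} with $c_-(D)=0$ together with the bound $\sigma(K)\le -2$ for non-trivial positive knots (the paper cites Przytycki for this; your Cromwell/genus alternative is a valid substitute). The only notable difference is the sharpness example---the paper uses the standard diagram of the $(-3,-3,-3)$ pretzel knot (9 positive crossings, $\sigma=-2$, $\ell(G_+(D))=6$), which has the added virtue of showing equality can occur with $\ell(G_+(D))>2$, whereas your trefoil realizes equality only at girth $2$.
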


\begin{proof}
Since $D$ is positive, $c_-(D) = 0$ and $\si(K) \le -2$ \cite{Przy2}. The standard diagram of the $(-3,-3,-3)$ pretzel knot, which has 9 positive crossings, $\si = -2$, and $G_+(D)$ with girth 6 provides an example where the equality  $\ell(G_+(D)) = \dfrac{2}{3}c(D)$ is achieved.
\end{proof}

\begin{exa} As an application, we observe that a positive alternating knot can never have a diagram $D$ such that $G_+(D)$ is a cycle graph. Such a diagram would have $\ell(G_+(D)) = c(D)$, a contradiction by Corollary \ref{SignatureBoundCor}.
\end{exa}

If we restrict our attention to  alternating links, we can get more specific results about girths of alternating diagrams. In particular, the following theorem states that any two reduced diagrams of a prime alternating link have the same girth.

\begin{thm} \label{AltGirth}
	Let $L$ be a prime alternating link. If $D$, $D'$ are two reduced alternating diagrams of $L$, then $G_+(D)$, $G_+(D')$ have the same girth.
\end{thm}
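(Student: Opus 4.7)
The plan is to combine the Menasco--Thistlethwaite flyping theorem with Whitney's 2-isomorphism theorem for graphs. Since $L$ is prime and alternating, any two reduced alternating diagrams $D$ and $D'$ of $L$ are related by a finite sequence of flypes together with planar isotopies. Planar isotopies do not alter $G_+$, so it suffices to prove that a single flype preserves the girth of the all-positive state graph; the full statement then follows by induction on the length of the flype sequence.

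To analyze a single flype, recall that it acts on a tangle $T$ attached to the remainder of the diagram by four strands, with a distinguished crossing $c$ on one side of $T$; the flype reflects $T$ across an axis and moves $c$ to the opposite side. Because $D$ is alternating, $G_+(D)$ is isomorphic to one of the two Tait (checkerboard) graphs of $D$, so every crossing of $D$ corresponds to a unique edge of $G_+(D)$. The boundary of the flype region determines a 2-vertex cut $\{u,v\}$ of $G_+(D)$ separating the subgraph $H$ coming from crossings inside $T$ from the subgraph coming from the rest of the diagram, with $c$ contributing an edge between $u$ and $v$. The key geometric step is to verify, by tracing the all-$A$ smoothing through both flype configurations, that $G_+(D^{*})$ differs from $G_+(D)$ only by detaching $H$ at $\{u,v\}$ and reattaching it with the roles of $u$ and $v$ exchanged --- that is, by a Whitney 2-switch at $\{u,v\}$.

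Once the flype is identified with a Whitney 2-switch, Whitney's 2-isomorphism theorem finishes the argument: a 2-switch preserves the cycle matroid, hence the family of edge sets that form simple cycles, and in particular the multiset of cycle lengths. Therefore $\ell(G_+(D)) = \ell(G_+(D^{*}))$, and iterating over the flype sequence produced by Menasco--Thistlethwaite yields $\ell(G_+(D)) = \ell(G_+(D'))$.

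The main obstacle is the middle step: the picture-level verification that a flype acts as a Whitney 2-switch on $G_+(D)$. This requires a case check for both standard flype configurations relative to the checkerboard coloring, using the hypothesis that $D$ is alternating and reduced so that the 2-vertex cut structure is genuinely preserved and no edges are created, duplicated, or destroyed. Once this picture is in place, the remainder of the argument is purely matroidal and does not require any further knot-theoretic input.
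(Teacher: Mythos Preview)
Your proposal is correct and follows essentially the same route as the paper: Menasco--Thistlethwaite reduces the question to the effect of a single flype on $G_+$, a flype acts as a Whitney 2-switch, and girth is a cycle-matroid invariant. The only difference is that where you sketch a direct picture-level verification that a flype is a Whitney 2-switch, the paper simply cites \cite{Greene2} for the fact that flypes (as mutations) induce Whitney flips on the state graph, and \cite{Oxley} for girth being a matroid invariant.
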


\begin{proof}
	The Tait flyping conjecture states that any two reduced alternating diagrams of $L$ are related by flypes \cite{MT1}. Flypes may be expressed as a series of mutations, which induce Whitney flips on the corresponding graph \cite{Greene2}. Thus $G_+(D)$ and $G_+(D')$ are 2-isomorphic graphs. Girth is an invariant of the cycle matroid \cite{Oxley}, so $G_+(D)$ and $G_+(D')$ have the same girth.
\end{proof}

\clearpage
\bibliographystyle{abbrv}
\bibliography{bibgirth1}{}

\end{document}